\documentclass{amsart}
\usepackage{amssymb, amsmath, latexsym, dsfont,tikz}
\usepackage{multirow}
\usepackage{setspace}
\usepackage{enumerate}
\usepackage{diagbox, faktor}
\usepackage{stmaryrd}
\usepackage{makecell}
\usepackage{longtable, graphicx}

\renewcommand{\baselinestretch}{\baselinestretch}
\renewcommand{\baselinestretch}{1.1}
\numberwithin{equation}{section}

\newtheorem{thm}{Theorem}[section]
\newtheorem{lem}[thm]{Lemma}

\newtheorem{prop}[thm]{Proposition}

\theoremstyle{definition}

\theoremstyle{remark}
\newtheorem{rmk}[thm]{Remark}

\numberwithin{equation}{section}

\newcommand{\ra}{{\ \longrightarrow \ }}
\newcommand{\nra}{{\ \longarrownot\longrightarrow \ }}

\newcommand{\ord}{\text{ord}}
\newcommand{\n}{{\mathbb N}}
\newcommand{\z}{{\mathbb Z}}
\newcommand{\q}{{\mathbb Q}}

\newcommand{\tail}{a_1\alpha_1^2+a_2\alpha_2^2+a_3\alpha_3^2}
\newcommand{\be}{{\mathbf e}}
\newcommand{\bv}{{\mathbf v}}
\newcommand{\bw}{{\mathbf w}}
\newcommand{\bx}{{\mathbf x}}
\newcommand{\by}{{\mathbf y}}
\newcommand{\bz}{{\mathbf z}}

\begin{document}
%%%%%%%%%%%%%%%%%%%%%%%%%%%%%%%%%%%%%%%%%%%%%%%%%%%%%%%%%%%%%%%%%%%%%%%%%%%%%
%%%%%%%%%%%%%%%%%%%%%%%%%%%%%%%%%%%%%%%%%%%%%%%%%%%%%%%%%%%%%%%%%%%%%%%%%%%%%
\title[]{Regular ternary sums of generalized polygonal numbers}

\author[] {Mingyu Kim}

\address{Department of Mathematics Education, Pusan National University, Busan 46241, Korea}
\email{mingyukim@pusan.ac.kr}
\thanks{This work was suppoted by a 2-Year Research Grant of Pusan National University.}

\subjclass[2020]{Primary 11D09, 11E12, 11E20} \keywords{Sums of polygonal numbers, integral quadratic forms, ternary quadratic forms}

%%%%%%%%%%%%%%%%%%%%%%%%%%%%%%%%%%%%%%%%%%%%%%%%%%%%%%%%%%%%%%%%%%%%%%%%%%%%%%%%%%%%%%%%%%

\begin{abstract}
In this article, we provide an explicit constant $C$ such that there is no regular ternary sum of generalized $m$-gonal numbers for any integer $m$ greater than $C$.
\end{abstract}

%%%%%%%%%%%%%%%%%%%%%%%%%%%%%%%%%%%%%%%%%%%%%%%%%%%%%%%%%%%%%%%%%%%%%%%%%%%%%%%%%%%%%%%%%%
\maketitle

%\nocite{*}

%%%%%%%%%%%%%%%%%%%%%%%%%%%%%%%%%%%%%%%%%%%%%%%%%%%%%%%%%%%%%%%%%%%%%%%%%%%%%%%%%%%%%%%%%%
%%%%%%%%%%%%%%%%%%%%%%%%%%%%%%%%%%%%%%%%%%%%%%%%%%%%%%%%%%%%%%%%%%%%%%%%%%%%%%%%%%%%%%%%%%
%%%%%%%%%%%%%%%%%%%%%%%%%%%%%%%%%%%%%%%%%%%%%%%%%%%%%%%%%%%%%%%%%%%%%%%%%%%%%%%%%%%%%%%%%%
%%%%%%%%%%%%%%%%%%%%%%%%%%%%%%%%%%%%%%%%%%%%%%%%%%%%%%%%%%%%%%%%%%%%%%%%%%%%%%%%%%%%%%%%%%
\section{Introduction}
%%%%%%%%%%%%%%%%%%%%%%%%%%%%%%%%%%%%%%%%%%%%%%%%%%%%%%%%%%%%%%%%%%%%%%%%%%%%%%%%%%%%%%%%%%
%%%%%%%%%%%%%%%%%%%%%%%%%%%%%%%%%%%%%%%%%%%%%%%%%%%%%%%%%%%%%%%%%%%%%%%%%%%%%%%%%%%%%%%%%%
%%%%%%%%%%%%%%%%%%%%%%%%%%%%%%%%%%%%%%%%%%%%%%%%%%%%%%%%%%%%%%%%%%%%%%%%%%%%%%%%%%%%%%%%%%
%%%%%%%%%%%%%%%%%%%%%%%%%%%%%%%%%%%%%%%%%%%%%%%%%%%%%%%%%%%%%%%%%%%%%%%%%%%%%%%%%%%%%%%%%%

Let $g(\bx)=g(x_1,x_2,\dots,x_k)$ be a quadratic polynomial with coefficients in the field of rational numbers $\q$.
Let $n$ be an integer, and let $R$ denote either the ring of integers $\z$, the ring of $p$-adic integers $\z_p$, or the field of real numbers $\mathbb{R}$.
We say that {\it $n$ is represented by $g$ over $R$} if the equation
$$
g(x_1,x_2,\dots,x_k)=n
$$
has a solution over $R$.
In this case, we write $n \to g$ over $R$.
When $R=\z$, we omit the phrase “over $\z$” and simply say that $n$ is represented by $g$.
We say that {\it $n$ is locally represented by $g$} if it is represented by $g$ over $\z_p$ for all primes $p$ and over $\mathbb{R}$.
Throughout, let $P$ denote the set of all prime numbers greater than $3$, and let $\Omega$ denote the set of all primes together with the infinite prime, so that $\Omega = P \cup \{2,3,\infty\}$.
Thus, $n$ is locally represented by $g$ if and only if it is represented by $g$ over $\z_p$ for all $p \in \Omega$, where we adopt the convention $\z_\infty = \mathbb{R}$.

Two quadratic polynomials $g_1(\bx)$ and $g_2(\bx)$ are said to be {\it equivalent} if
$$
g_1(\bx)=g_2(\bx T+\bx_0)
$$
for some $T \in GL_n(\z)$ and $\bx_0 \in \z^n$.
In this case, $n$ is (locally) represented by $g_1$ if and only if it is (locally, respectively) represented by $g_2$.

For an integer $m \ge 3$, define
$$
P_m(x)=\frac{(m-2)x^2-(m-4)x}{2}.
$$
An integer $n$ is called an {\it $m$-gonal number} (or a {\it polygonal number of order $m$}) if $n=P_m(u)$ for some $u \in \n$.
Fermat’s polygonal number theorem states that every positive integer is a sum of at most $m$ polygonal numbers of order $m$.
An integer $n$ is called a {\it generalized $m$-gonal number} if $n=P_m(v)$ for some $v \in \z$.
A quadratic polynomial of the form
$$
f(x_1,x_2,\dots,x_k)=a_1P_m(x_1)+\cdots+a_kP_m(x_k)\quad (a_i \in \n)
$$
is called a {\it $k$-ary sum of generalized $m$-gonal numbers} (or an {\it $m$-gonal form}).
An $m$-gonal form is said to be {\it universal} if it represents all nonnegative integers.
Gauss’s Eureka theorem states that the ternary triangular form $P_3(x_1)+P_3(x_2)+P_3(x_3)$ is universal.

We now consider the notion of regularity of an $m$-gonal form.
There are two possible definitions:
\begin{enumerate}[(i)]
\item $f$ is regular if it represents all integers that are locally represented.
\item $f$ is regular if it represents all nonnegative integers that are locally represented.
\end{enumerate}
In \cite{CO13}, it was shown that there are only finitely many primitive ternary triangular forms that are regular in sense (i).
Chan and Ricci \cite{CR} proved that there exist only finitely many primitive regular ternary complete quadratic polynomials of a fixed conductor.
As a corollary, they obtained the finiteness of regular ternary $m$-gonal forms (in sense (i)) for each fixed integer $m \ge 3$.

In this article, however, we adopt definition (ii).
Thus, an $m$-gonal form is said to be {\it regular} if it represents all nonnegative integers that are locally represented by it.
To justify this choice, note that
$$
\{P_m(u) : u \in \z\} \subset \n_0 \quad \text{whereas} \quad 
\{P_m(u) : u \in \mathbb{R}\} = \left\{ v \in \mathbb{R} : v \ge -\frac{(m-4)^2}{8(m-2)} \right\},
$$
where $\n_0 = \n \cup \{0\}$.
Thus, no $m$-gonal form can represent a negative integer, whereas it may locally represent negative integers.

To illustrate the difference, suppose we adopt definition (i).
Then the quaternary triangular form
$$
p_3(1,1,3,6)=P_3(x_1)+P_3(x_2)+3P_3(x_3)+6P_3(x_4)
$$
is universal (see \cite{BK}) but not regular.
Indeed, it locally represents every integer $\ge -\frac{1+1+3+6}{8}$, and hence in particular the integer $-1$.
More precisely, for $(x_1,x_2,x_3,x_4)\in \z_p^4$ defined by
$$
(x_1,x_2,x_3,x_4)=
\begin{cases}
\left(-\frac{1}{2},-\frac{1}{2},0,-\frac{1}{2}\right) & \text{if } p=3,\\[1em]
\left(0,0,-\frac{1}{3},-\frac{1}{3}\right) & \text{if } p\in \Omega \smallsetminus \{3\},
\end{cases}
$$
we have
$$
\frac{x_1(x_1+1)}{2}+\frac{x_2(x_2+1)}{2}
+3\frac{x_3(x_3+1)}{2}+6\frac{x_4(x_4+1)}{2}=-1.
$$
This example shows that any universal triangular form $p_3(a_1,\dots,a_k)$ with $a_1+\cdots+a_k>8$ cannot be regular in the sense of (i).
Moreover, the ternary triangular form $p_3(1,3,27)$ is excluded from the list of regular forms (see \cite[Table 5]{KO}), since it locally but not globally represents $-3$.

The main result of this article is as follows.

%%%%%%%%%%%%%%%%%%%%%%%%%%%%%%%%%%%%%%%%%%%%%%%%%%%%%%%%%%%%%%%%%%%%%%%%%%%%%%%%%%%%%%%%%%

\begin{thm} \label{thmmain}
Let $m$ be an integer greater than or equal to 3 for which there exists a regular ternary $m$-gonal form. Then we have
$$
m\le \begin{cases}35&\text{if}\ \ m\equiv 1\pmod 2,\ m\equiv 2\pmod 3,\\
147&\text{if}\ \ m\equiv 1\pmod 2,\ m\not\equiv 2\pmod 3,\\
38&\text{if}\ \ m\equiv 2\pmod 4,\ m\equiv 2\pmod 3,\\
142&\text{if}\ \ m\equiv 2\pmod 4,\ m\not\equiv 2\pmod 3,\\
188&\text{if}\ \ m\equiv 0\pmod 4,\ m\equiv 2\pmod 3,\\
712&\text{if}\ \ m\equiv 0\pmod 4,\ m\not\equiv 2\pmod 3.\end{cases}
$$
\end{thm}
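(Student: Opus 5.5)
Throughout, write $f=a_1P_m(x_1)+a_2P_m(x_2)+a_3P_m(x_3)$ with $a_1\le a_2\le a_3$ and $\gcd(a_1,a_2,a_3)=1$; this normalization is harmless, since $f$ is regular if and only if $f/\gcd(a_1,a_2,a_3)$ is. The plan is to complete the square and translate the problem into one about a ternary quadratic form with congruence conditions. Put $r=m-4$. Multiplying $P_m(x)$ by $8(m-2)$ gives
$$8(m-2)P_m(x)=(2(m-2)x-r)^2-r^2 .$$
Hence $n$ is represented by $f$ if and only if
$$8(m-2)n+(a_1+a_2+a_3)r^2$$
is represented by the diagonal form $\langle a_1,a_2,a_3\rangle$ at a vector $(y_1,y_2,y_3)$ with $y_i\equiv -r\pmod{2(m-2)}$. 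The same equivalence holds over $\z_p$.

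For $p\nmid 2(m-2)$ the congruence condition is vacuous over $\z_p$, since $2(m-2)$ is a unit there. For $p\mid 2(m-2)$ the condition forces $y_i\equiv -r$ modulo the $p$-part of $2(m-2)$, and this is automatically satisfied because $\gcd(r,m-2)\mid 2$. The parity of $m$ and the residue $m\bmod 3$ control $\gcd(m-4,2(m-2))$ and the local behaviour at $2$ and $3$. This is where the six cases of Theorem~\ref{thmmain} come from.

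The core step is to exhibit small integers $n$ that are locally represented but not globally represented once $m$ is large. The key observation is that the small values of $P_m$ are
$$0,\ 1,\ m-3,\ m,\ 3m-8,\ \dots$$
So for $n<m-3$, the only way to write $n=f(\bx)$ is $n=\epsilon_1a_1+\epsilon_2a_2+\epsilon_3a_3$ with $\epsilon_i\in\{0,1\}$. I would show that, for $m$ beyond the stated bound, some $n$ below $m-3$ is not of this form but is locally represented. I would proceed in two steps.
\begin{enumerate}[(a)]
\item First take $n=2$. Local representability of $2$ at all $p\in\Omega$ holds for every $f$, except possibly at finitely many primes dividing $a_1a_2a_3$ and at $2,3$. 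The reason is that over $\z_p$ with $p\nmid 2(m-2)$ the shifted form is an isotropic-at-most-one-prime ternary form, so large local freedom exists. The argument then forces $a_1=1$, and successively $a_2\in\{1,2\}$, and then $a_3$ to lie in a small list. This is the usual escalation: each $n$ that is missed becomes the next truncation point.
\item Then, for each escalated triple $(1,a_2,a_3)$, I would find the least $n<m-3$ not of the form $\sum\epsilon_ia_i$, and verify that $n$ is locally represented by $f$. This uses the standard local representation theory of ternary $\z_p$-lattices (O'Meara) applied to the shifted form together with the congruence condition at $p\mid 2(m-2)$. Conclude that $m-3\le n$ unless $n$ fails locally at some $p$.
\end{enumerate}

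The obstacle I expect is step (b) at the anisotropic primes, namely $p=2$, $p=3$, and the primes dividing $a_2a_3$. There a single small $n$ can genuinely fail to be locally represented, so a whole family of candidates must be tried. For example, one uses $n$ in several residue classes, or uses numbers between $m-3$ and $3m-8$ built from the values $m-3$ and $m$, which contribute sums like $a_i(m-3)+\cdots$. One then shows that at least one candidate is locally but not globally represented.

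The sizes $35,147,38,\dots,712$ should come out of this as follows. When $3\nmid m-2$ or $4\mid m$, the local conditions at $3$ or $2$ are weaker. Consequently the first locally represented but globally missed integer can be larger, of order roughly $4$ or $5$ times $a_3$ rather than $a_3$. That yields the larger bounds. I would handle the six congruence classes separately, in each bounding $m-3$ by an explicit witness $n$.
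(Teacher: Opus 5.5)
Your closing count is the right one, and it matches the paper's last step. As long as no coordinate leaves its two smallest admissible values ($n<m-3$ in your language, $y_i\in\{\alpha_i,c-\alpha_i\}$ in the paper's), the form takes at most $8$ values. So nine represented integers in a short window bound $c$; this is Lemma~\ref{lembound}(ii) with $N=1$.

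\textbf{The gap.} The gap is in producing those nine integers. Steps (a) and (b) are an escalation argument, which works for universality but not for regularity. A regular form is only obliged to represent integers that are \emph{locally} represented, and you give no reason why $1$, $2$, or any later truncation point is. In general none of them need be.

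Every prime $p\ge 5$ with $p\nmid c$ at which $\langle a_1,a_2,a_3\rangle$ is anisotropic divides $a_1a_2a_3$, and there is no a priori bound on the number of such primes. The local structure at these primes can also be degenerate. For instance, $\langle 1\rangle\perp\langle p\epsilon,p\epsilon'\rangle$ with $-\epsilon\epsilon'$ a nonsquare misses every nonsquare unit, i.e.\ nearly half of all residues. With $t$ such primes only a proportion of roughly $2^{-t}$ of the integers survive locally. So no window of length independent of $a_1,a_2,a_3$ is guaranteed to contain nine locally represented integers.

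Consequently:
\begin{enumerate}[(1)]
\item ``forces $a_1=1$, then $a_2\in\{1,2\}$'' is unsupported. Regular forms with $a_1>1$ exist, e.g.\ $2x^2+3y^2+6z^2$ for $m=4$.
\item ``try a whole family of candidates'' has no mechanism behind it.
\item Nothing in the plan produces the constants $35,147,\dots,712$.
\end{enumerate}
Separately, at $p=2$ with $4\mid m$ the congruence $y_i\equiv -r\pmod 4$ is not automatic. The $2$-adic condition there is genuine, which is why the paper needs $\kappa$, $\nu$ and Lemma~\ref{lemz23}.

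\textbf{What the paper supplies that your plan lacks.}
\begin{enumerate}[(1)]
\item \emph{Watson transformations.} Lemma~\ref{lemlambda} replaces the coset of Proposition~\ref{propcorres} by a tight regular coset of the same conductor whose lattice is $p$-stable at every $p\nmid c$. Then at an anisotropic prime only the thin set $\{p\epsilon_p\Delta_p\gamma^2:\gamma\in\z_p\}$ is missed; Lemma~\ref{lemzp}, $\psi_p$ and $\eta(n,s)$ quantify this. The price is that the $m$-gonal shape is lost (see the remark following Lemma~\ref{lemlambda}). One must treat an arbitrary tight regular $a_1(cx_1+\alpha_1)^2+a_2(cx_2+\alpha_2)^2+a_3(cx_3+\alpha_3)^2$ with $(\alpha_i,c)=1$. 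Your list $0,1,m-3,m,\dots$ is then unavailable, and only $c-2\alpha_i\ge\delta$ survives.
\item \emph{Bounding the number $t$ of anisotropic primes.} Lemma~\ref{lem1} gives a target missed by $\langle a_1,a_2\rangle$ over $\z_{p_1}$. Lemma~\ref{lem2} makes it prime to $p_2\cdots p_t$ within a window of length $(t+3)2^{t-3}$. Tight regularity then forces $a_3\le k$, and $p_1\cdots p_t\le a_1a_2a_3$ contradicts Proposition~\ref{propproduct} for large $t$. Feeding back the bounds on $a_1,a_2$ from Lemma~\ref{lembound} reduces $t$ to at most $4$, $6$, $6$ or $9$.
\item \emph{Counting.} Only then does Lemma~\ref{lemlocal} guarantee enough represented integers in an explicit progression $\delta c(\kappa u+\nu)+\sum a_i\alpha_i^2$. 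Your eight-value count (refined in Case~4) then gives $a_1(c+2)\le\delta(\kappa(n-1)+\nu)$, which yields the stated bounds on $c$ and $m$.
\end{enumerate}
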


%%%%%%%%%%%%%%%%%%%%%%%%%%%%%%%%%%%%%%%%%%%%%%%%%%%%%%%%%%%%%%%%%%%%%%%%%%%%%%%%%%%%%%%%%%

Any quadratic polynomial $g(\bx)$ with rational coefficients can be written in the form
$$
g(\bx)=Q(\bx)+\ell(\bx)+C,
$$
where $Q$ is a quadratic form, $\ell$ is a linear form, and $C$ is a constant.
Throughout this article, we assume that $Q$ is positive definite unless otherwise stated.
Let $B$ be the symmetric bilinear form associated with $Q$, that is,
$$
B(\bx,\by)=\frac{1}{2}\bigl(Q(\bx+\by)-Q(\bx)-Q(\by)\bigr).
$$
Then the matrix $(B(x_i,x_j))_{1\le i,j\le k} \in M_k(\q)$ is invertible.
Hence there exists a unique vector $\bv \in \q^k$ such that
$$
2B(\bx,\bv)=\ell(\bx).
$$
It follows that
$$
g(\bx)=Q(\bx+\bv)-Q(\bv)+C.
$$
We call $g$ {\it complete} if $C=Q(\bv)$, in which case
$$
g(\bx)=Q(\bx+\bv).
$$

Let $h=h(\bx)$ be a complete quadratic polynomial.
Then $h(\bx)\ge 0$ for all $\bx \in \q^k$.
We define
$$
\min(h)=\min \{ h(\bx) : \bx \in \z^k\}
$$
and call it the {\it minimum of $h$}.
We say that $h$ is {\it tight regular} if it represents all locally represented integers greater than or equal to $\min(h)$.

Throughout this article, we let $m \ge 3$ be an integer and define constants $\delta, c, d, \mu$ depending only on $m$ by
$$
\delta=\delta(m)=
\begin{cases}
4 & \text{if } m\equiv 1 \pmod{2},\\
2 & \text{if } m\equiv 2 \pmod{4},\\
1 & \text{if } m\equiv 0 \pmod{4},
\end{cases}
$$
and
\begin{align*}
c &= \delta \frac{m-2}{2},\\
d &= \delta \frac{m-4}{4},\\
\mu &= \delta c.
\end{align*}
Then $c$ and $d$ are relatively prime.

Fix $m \in \z_{\ge 3}$.
Let $a_1,a_2,a_3$ be positive integers, and let $n$ be a nonnegative integer.
With this notation, the equation
$$
a_1P_m(x_1)+a_2P_m(x_2)+a_3P_m(x_3)=n
$$
holds if and only if
$$
a_1(cx_1-d)^2+a_2(cx_2-d)^2+a_3(cx_3-d)^2
= \mu n + d^2(a_1+a_2+a_3).
$$
In Proposition \ref{propcorres}, we show that a ternary $m$-gonal form
$$
a_1P_m(x_1)+a_2P_m(x_2)+a_3P_m(x_3)
$$
is regular if and only if the complete quadratic polynomial
$$
a_1(cx_1-d)^2+a_2(cx_2-d)^2+a_3(cx_3-d)^2
$$
is tight regular.
The absence of a regular ternary complete quadratic polynomial of conductor $c$ does not preclude the existence of a regular (in sense (ii)) ternary $m$-gonal form.

The remainder of this article is organized as follows.
In Section~2, we review basic notions and useful lemmas concerning quadratic lattices and spaces, $\z$-cosets, Watson transformations, and related topics.
Section~3 is devoted to the proof of Theorem~\ref{thmmain}.
We briefly outline the strategy of the proof.
Let $m \ge 3$ be an integer for which there exists a regular ternary $m$-gonal form
$$
f=a_1'P_m(x_1)+a_2'P_m(x_2)+a_3'P_m(x_3).
$$
The existence of such a form implies the existence of a tight regular ternary $\z$-coset $L+\bv$ with conductor $c$.
Under Watson transformations, $L+\bv$ can be transformed into another tight regular ternary $\z$-coset $K+\bw$ with the same conductor $c$, but which locally represents more small positive integers than $L+\bv$.
Since $K+\bw$ is tight regular, every locally represented integer greater than or equal to its minimum is globally represented. 
In particular, this implies that $K+\bw$ represents many integers in any fixed interval.
On the other hand, as the conductor of $K+\bw$ increases, the number of integers (in a fixed interval) that it can represent decreases.
Combining these observations yields an upper bound for $c$, and hence an upper bound for $m$.

%%%%%%%%%%%%%%%%%%%%%%%%%%%%%%%%%%%%%%%%%%%%%%%%%%%%%%%%%%%%%%%%%%%%%%%%%%%%%%%%%%%%%%%%%%
%%%%%%%%%%%%%%%%%%%%%%%%%%%%%%%%%%%%%%%%%%%%%%%%%%%%%%%%%%%%%%%%%%%%%%%%%%%%%%%%%%%%%%%%%%
%%%%%%%%%%%%%%%%%%%%%%%%%%%%%%%%%%%%%%%%%%%%%%%%%%%%%%%%%%%%%%%%%%%%%%%%%%%%%%%%%%%%%%%%%%
%%%%%%%%%%%%%%%%%%%%%%%%%%%%%%%%%%%%%%%%%%%%%%%%%%%%%%%%%%%%%%%%%%%%%%%%%%%%%%%%%%%%%%%%%%
\section{Preliminaries}
%%%%%%%%%%%%%%%%%%%%%%%%%%%%%%%%%%%%%%%%%%%%%%%%%%%%%%%%%%%%%%%%%%%%%%%%%%%%%%%%%%%%%%%%%%
%%%%%%%%%%%%%%%%%%%%%%%%%%%%%%%%%%%%%%%%%%%%%%%%%%%%%%%%%%%%%%%%%%%%%%%%%%%%%%%%%%%%%%%%%%
%%%%%%%%%%%%%%%%%%%%%%%%%%%%%%%%%%%%%%%%%%%%%%%%%%%%%%%%%%%%%%%%%%%%%%%%%%%%%%%%%%%%%%%%%%
%%%%%%%%%%%%%%%%%%%%%%%%%%%%%%%%%%%%%%%%%%%%%%%%%%%%%%%%%%%%%%%%%%%%%%%%%%%%%%%%%%%%%%%%%%

We adopt the geometric language of quadratic spaces and lattices.
Let $F$ be either the field of rational numbers $\q$ or the field of $p$-adic numbers $\q_p$ for a prime $p$, and let $R=\z$ or $\z_p$ accordingly.
Let $V$ be a finite-dimensional quadratic space over $F$ with quadratic map $Q:V\to F$.
Throughout, we assume that the symmetric bilinear form $B:V\times V\to F$ defined by
$$
B(\bx,\by)=\frac{1}{2}\bigl(Q(\bx+\by)-Q(\bx)-Q(\by)\bigr)
$$
is nondegenerate.

An $R$-lattice is a finitely generated free $R$-submodule of $V$.
We always assume that an $R$-lattice $L$ is {\it integral}, that is, $B(\bx,\by)\in R$ for all $\bx,\by\in L$.
Let $L=R\bv_1+R\bv_2+\cdots+R\bv_k$ be an $R$-lattice on $V$.
The matrix $M_L=(B(\bv_i,\bv_j))_{1\le i,j\le k}$ is called the Gram matrix of $L$ with respect to the basis $\{\bv_1,\bv_2,\dots,\bv_k\}$.
The ideal of $R$ generated by $\{B(\bx,\by):\bx,\by\in L\}$ is called the {\it scale} of $L$ and is denoted by $\mathfrak{s}(L)$.
We call $L$ {\it primitive} if $\mathfrak{s}(L)=R$.
For $\alpha \in F$, let $L^{\alpha}$ denote the $R$-lattice $R\bv_1+R\bv_2+\cdots+R\bv_k$ in the scaled quadratic space $(V,Q^{\alpha})$, where $Q^{\alpha}(\bx)=\alpha Q(\bx)$.
For an $R$-lattice $K$, we write
$$
K=R\bw_1+R\bw_2+\cdots+R\bw_k\simeq A
$$
when $\{\bw_1,\bw_2,\dots,\bw_k\}$ is an $R$-basis of $K$ and $A=(B(\bw_i,\bw_j))_{1\le i,j\le k}$.

Let $L$ and $K$ be $R$-lattices.
If there exists a linear map $\sigma: FL\to FK$ such that
$$
B(\sigma(\bx),\sigma(\by))=B(\bx,\by)\quad \text{for all }\bx,\by\in FL
$$
and $\sigma(L)\subseteq K$, then we say that {\it $L$ is represented by $K$} and write $L\ra K$.
In this case, $\sigma$ is called a {\it representation of $L$ by $K$}.
If, in addition, $\sigma(L)=K$, then $L$ and $K$ are said to be {\it isometric}.

Throughout, we assume that a quadratic space $V$ over $\q$ is positive definite, that is, $Q(\bv)>0$ for all $\bv \in V\setminus\{\mathbf{0}\}$.
A $\z$-lattice $L$ is said to be {\it diagonal} if its Gram matrix with respect to some basis is diagonal.
A diagonal matrix with diagonal entries $a_1,a_2,\dots,a_k$ is denoted by $\langle a_1,a_2,\dots,a_k\rangle$.
By abuse of notation, we write $L=\langle a_1,a_2,\dots,a_k\rangle$ if the Gram matrix of $L$ with respect to some basis is equal to $\langle a_1,a_2,\dots,a_k\rangle$.

For a $\z$-lattice $L$ and a prime $p$, we denote by $L_p$ the $\z_p$-lattice $L\otimes \z_p$.
For a $\z_p$-lattice $K$, the set
$$
Q(K)=\{ Q(\bv)\in \z_p : \bv \in K\}
$$
can be determined using the results of \cite{OM1}.
For an odd prime $p$, let $\Delta_p$ denote a nonsquare unit in $\z_p$.
Any unexplained notation and terminology can be found in \cite{Ki} or \cite{OM}.

Let $V$ be a finite-dimensional quadratic space over $\q$.
A subset $U$ of $V$ is called a {\it $\z$-coset} on $V$ if
$$
U=L+\bv=\{\, \bx+\bv \in V : \bx \in L \,\}
$$
for some $\z$-lattice $L$ on $V$ and a vector $\bv \in V$.
Throughout, when we say that “$L+\bv$ is a $\z$-coset,” we always mean that $L$ is a $\z$-lattice and $\bv \in \q L = \q \otimes L$, so that $L+\bv$ is a $\z$-coset on the quadratic space $\q L$.
We call $L+\bv$ a $k$-ary $\z$-coset if $\operatorname{rank}(L)=k$.
A $\z$-coset $L+\bv$ is called {\it integral} if $Q(\bx+\bv)\in \z$ for all $\bx \in L$.
An integral $\z$-coset $L+\bv$ is called {\it primitive} if $\mathfrak{n}(L+\bv)=\z$, where $\mathfrak{n}(L+\bv)$ is the ideal of $\z$ generated by $\{Q(\bx+\bv) : \bx \in L\}$.
The ideal $\mathfrak{n}(L+\bv)$ is called the {\it norm} of $L+\bv$.
The smallest positive integer $\tilde{c}$ such that $\tilde{c}\bv \in L$ is called the {\it conductor of $L+\bv$}.
Thus, a $\z$-coset $L+\bv$ is a $\z$-lattice if and only if its conductor is equal to $1$.
For each prime $p$, $\z_p$-cosets are defined analogously.

Let $n$ be a nonnegative integer.
We say that $n$ is represented by $L+\bv$ if there exists a vector $\bx \in L$ such that $Q(\bx+\bv)=n$.
We say that $n$ is locally represented by $L+\bv$ if, for every prime number $p$, there exists a vector $\bx_p \in L_p$ such that $Q(\bx_p+\bv)=n$.

Let $L+\bv$ be a $\z$-coset of conductor greater than $1$.
We define
$$
\min(L+\bv)=\min\{Q(\bx+\bv) : \bx\in L\}.
$$
We say that $L+\bv$ is {\it tight regular} if it represents all locally represented integers greater than or equal to $\min(L+\bv)$.

Let $n$ be a nonnegative integer, and let 
$$
f=a_1P_m(x_1)+a_2P_m(x_2)+a_3P_m(x_3)
$$
be a ternary $m$-gonal form.
As shown in the introduction, $n$ is represented by $f$ if and only if
$$
\mu n+d^2(a_1+a_2+a_3)
$$
is represented by the ternary complete quadratic polynomial
$$
g=a_1(cx_1-d)^2+a_2(cx_2-d)^2+a_3(cx_3-d)^2.
$$
Define a ternary $\z$-lattice $L$ and a vector $\bv \in \q L$ by
$$
L=\z \be_1+\z \be_2+\z \be_3 \simeq \langle a_1c^2,a_2c^2,a_3c^2\rangle,
\quad
\bv=-\frac{d}{c}(\be_1+\be_2+\be_3).
$$
Then
$$
a_1(cx_1-d)^2+a_2(cx_2-d)^2+a_3(cx_3-d)^2=\mu n+d^2(a_1+a_2+a_3)
$$
if and only if
$$
Q(x_1\be_1+x_2\be_2+x_3\be_3+\bv)=\mu n+d^2(a_1+a_2+a_3).
$$
Thus, $n$ is represented by $f$ if and only if $\mu n + d^2(a_1+a_2+a_3)$ is represented by the $\z$-coset $L+\bv$.
Consequently, $g$ is tight regular if and only if $L+\bv$ is tight regular.

Let $L$ be a $\z$-lattice and let $N$ be a positive integer.
A {\it Watson transformation of $L$ modulo $N$} is the sublattice of $L$ defined by
$$
\Lambda_N(L)=\{ \bx \in L : Q(\bx+\bz)\equiv Q(\bz)\ \text{for all }\bz \in L\}.
$$
We denote by $\lambda_N(L)$ the $\z$-lattice obtained from $\Lambda_N(L)$ by scaling $\q L$ by a suitable rational number so that $\mathfrak{s}(\lambda_N(L))=\mathfrak{s}(L)$.

Let $p$ be a prime, and let
$$
L=\z \be_1+\z \be_2+\z \be_3 \simeq \langle a_1,p^{s_2}a_2,p^{s_3}a_3\rangle
$$
be a ternary diagonal $\z$-lattice, where $(p,a_1a_2a_3)=1$ and $0\le s_2\le s_3$.
Then
$$
\Lambda_p(L)=
\begin{cases}
\z (p\be_1)+\z \be_2+\z \be_3 \simeq \langle p^2a_1,p^{s_2}a_2,p^{s_3}a_3\rangle & \text{if } s_2>0,\\
\z (p\be_1)+\z (p\be_2)+\z \be_3 \simeq \langle p^2a_1,p^2a_2,p^{s_3}a_3\rangle & \text{if } p>2,\ s_2=0<s_3.
\end{cases}
$$
If $p=2$, $s_2=0$, $s_3\ge 2$, and $a_1a_2\equiv 1 \pmod{4}$, then
$$
\Lambda_4(L)=\z (2\be_1)+\z (2\be_2)+\z \be_3 \simeq \langle 4a_1,4a_2,2^{s_3}a_3\rangle,
$$
and hence
$$
\lambda_4(L)=\z (2\be_1)+\z (2\be_2)+\z \be_3 \simeq \langle a_1,a_2,2^{s_3-2}a_3\rangle.
$$
We refer the reader to \cite{CE,CO03} for further details on Watson transformations.

Let $K$ be a diagonal ternary $\z$-lattice.
For an odd prime $p$, we say that $K$ is {\it $p$-stable} if
$$
\langle 1,-1\rangle \ra K_p \quad \text{or} \quad K_p \simeq \langle 1,-\Delta_p\rangle \perp \langle p\epsilon_p\rangle,
$$
where $\epsilon_p\in \z_p^{\times}$.
In the first case, $K_p$ is isotropic and $Q(K_p)=\z_p$.
In the latter case, $K_p$ is anisotropic and
$$
\z_p^{\times} \subset Q(K_p)=\z_p \smallsetminus \{ p\epsilon_p\Delta_p \gamma^2 : \gamma \in \z_p\}.
$$
We say that $K$ is {\it $2$-stable} if
$$
K_2 \text{ is unimodular} \quad \text{or} \quad \langle 1,3\rangle \ra K_2 \quad \text{or} \quad \langle 1,7\rangle \ra K_2.
$$
If $K$ is $2$-stable, then
$$
\{ \gamma \in \z_2 : \ord_2(\gamma)\equiv 0 \pmod{2}\} \subseteq Q(K_2),
$$
except when $K_2 \simeq \begin{pmatrix}2&1\\1&2\end{pmatrix}\perp \langle \epsilon \rangle$, in which case
$$
Q(K_2)=\z_2 \setminus \{ (\epsilon+4)\gamma^2 : \gamma \in \z_2\}.
$$
For background on local representations of lattices, we refer the reader to \cite{OM1}.
For a prime $q$, we say that $K$ is {\it $q$-unstable} if it is not $q$-stable.

The following lemma is a slight modification of \cite[Proposition 4.6]{CR}, and its proof is essentially identical.
In Lemma \ref{lemlambda}, the $\z$-coset $\lambda_q(L)+p^j\bv$ coincides with $\Lambda_q(L)+p^j\bv$ as a set, but is equipped with a scaled quadratic map so that
$$
p^j\bv \in \q(\Lambda_q(L))=\q(\lambda_q(L)).
$$

%%%%%%%%%%%%%%%%%%%%%%%%%%%%%%%%%%%%%%%%%%%%%%%%%%%%%%%%%%%%%%%%%%%%%%%%%%%%%%%%%%%%%%%%%%

\begin{lem} \label{lemlambda}
Let $L+\bv$ be a primitive ternary $\z$-coset with conductor $\tilde{c}$, and let $p$ be a prime not dividing $\tilde{c}$.
Let $j$ be the smallest positive integer such that $p^j\equiv 1 \pmod{\tilde{c}}$.
Assume that $L$ is diagonal and $p$-unstable.
If $L+\bv$ is tight regular, then the $\z$-coset $\lambda_q(L)+p^j\bv$ is primitive and tight regular,
where $q=4$ if $p=2$ and the rank of the unimodular component of $L_2$ is equal to $2$, and $q=p$ otherwise.
Moreover, the conductor of $\lambda_q(L)+p^j\bv$ is equal to $\tilde{c}$, and $\lambda_q(L)$ is diagonal.
\end{lem}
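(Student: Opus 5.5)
We follow the strategy of \cite[Proposition 4.6]{CR}: every integer locally represented by the new coset is, after multiplication by $p^2$ (or by $4$ in the $q=4$ case), locally represented by $L+\bv$. Tight regularity of $L+\bv$ then produces a global representation, and $p$-instability forces that representing vector into $\Lambda_q(L)$ shifted appropriately.

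\emph{Structural claims.} Write $L=\z\be_1+\z\be_2+\z\be_3$ diagonal. Since $L$ is $p$-unstable, $L_p$ has the shape $\langle a_1,p^{s_2}a_2,p^{s_3}a_3\rangle$ with $s_2>0$, or $p>2$ and $s_2=0<s_3$ with $\langle a_1,a_2\rangle\simeq\langle1,-\Delta_p\rangle$ anisotropic and $s_3\ge2$. For $p=2$ in the rank-$2$ unimodular case we have $a_1a_2\equiv1\pmod 4$ and $s_3\ge2$. The explicit formulas for $\Lambda_p(L)$ and $\Lambda_4(L)$ recalled above then show directly that $\lambda_q(L)$ is diagonal.

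For the conductor, note that $p^j\equiv1\pmod{\tilde c}$ gives $p^j\bv\equiv\bv$ modulo $L$ at every prime dividing $\tilde c$. Since $\Lambda_q(L)$ agrees with $L$ away from $p$, and $p^j\bv\in L_p$, we get $p^j\bv\in\Lambda_q(L)_p$ as well. Hence the conductor of $\Lambda_q(L)+p^j\bv$ is exactly $\tilde c$, and scaling does not change the conductor.

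For primitivity and integrality we track the scaling factor: $\lambda_q(L)$ is $\Lambda_q(L)$ scaled by $p^{-2}$ (or $p^{-1}$, or $1/4$). We check that $Q(\bx+p^j\bv)$ lies in $p^2\z$ for $\bx\in\Lambda_q(L)$, locally at $p$ and at the primes dividing $\tilde c$. Primitivity of the scaled coset then follows by comparing norms prime by prime, using primitivity of $L+\bv$.

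\emph{Transfer of representations.} Suppose $n\ge\min(\lambda_q(L)+p^j\bv)$ is locally represented by $\lambda_q(L)+p^j\bv$. Then $p^2n$ (resp.\ $4n$) is locally represented by $\Lambda_q(L)+p^j\bv$ with the original $Q$. At primes $\ell\nmid p\tilde c$ the cosets are lattices equal to $L_\ell$, and multiplication by the unit $p^{-j}$ moves vectors back. At $\ell\mid\tilde c$, we have $L_\ell+p^j\bv=L_\ell+\bv$. At $p$, the coset is a lattice. So $p^{2j}$ times a suitable quantity is locally represented by $L+\bv$. More cleanly, we use $\Lambda_q(L)+p^j\bv\subseteq L+p^j\bv$, and the map $\bx\mapsto p^{-j}\bx$ realises the correspondence $L+\bv\leftrightarrow$ its $p^j$-multiple. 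Since $\min$ scales compatibly, tight regularity of $L+\bv$ gives a global vector $\by\in L+\bv$ representing the corresponding integer, and multiplying by $p^j$ lands in $L+p^j\bv$.

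\emph{Key step.} The crucial point is that any $\bz\in L_p$ with $Q(\bz)\in p^2\z_p$ (or $Q(\bz)\in 4\z_2$, $\ord_2$ even, in the $q=4$ case) lies in $\Lambda_q(L)_p$. This holds exactly because the unimodular part $\langle a_1\rangle$, or the anisotropic plane $\langle a_1,a_2\rangle$, or $\langle a_1,a_2\rangle$ with $a_1a_2\equiv1\pmod4$, represents $0$ modulo $p^2$ (resp.\ $4$) only by vectors divisible by $p$ (resp.\ $2$). This forces the global vector $p^j\by$ into $\Lambda_q(L)+p^j\bv$.

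The main obstacle is the bookkeeping with the minimum. We must check that $n\ge\min(\lambda_q(L)+p^j\bv)$ implies that the corresponding integer is at least $\min(L+\bv)$. This holds because $\Lambda_q(L)+p^j\bv$, rescaled, is contained in a rescaled copy of $L+\bv$, so its minimum can only be larger. We must also handle the $2$-adic case carefully, especially when the integer has odd $2$-order.
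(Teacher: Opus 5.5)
Your outline has the same shape as the paper's argument: pass from $n$ to $p^sn$, compare with $L+\bv$, apply tight regularity, then use $p$-instability to push the global vector into $\Lambda_q(L)+p^j\bv$. However, the transfer step, which is the heart of the lemma, fails as written. The map $\bx\mapsto p^{-j}\bx$ sends $L+p^j\bv$ to $p^{-j}L+\bv$, not to $L+\bv$, and it rescales $Q$ by $p^{-2j}$. Likewise, if $\by\in L+\bv$ has $Q(\by)=N$, then $p^j\by$ has norm $p^{2j}N$. So to end with a vector of norm $p^sn$ you would have to represent $p^{s-2j}n$ by $L+\bv$. That number is in general not an integer, and you have no local information about it at $p$. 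The inclusion $p^j(L+\bv)\subseteq L+p^j\bv$ goes the wrong way, and $\lambda_q(L)_p$ represents integers that $L_p$ does not (that is the point of the transformation).

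The missing observation is exactly what the choice of $j$ is for. Since $\tilde c\bv\in L$ and $\tilde c\mid p^j-1$, we get $(p^j-1)\bv\in L$, hence $L+p^j\bv=L+\bv$ as sets. You state both local halves of this but never draw the global conclusion. With it, $\Lambda_q(L)+p^j\bv\subseteq L+\bv$, so $p^sn$ is locally represented by $L+\bv$ and $p^sn\ge\min(L+\bv)$. Tight regularity then gives $\by\in L+\bv$ with $Q(\by)=p^sn$, and $\by$ itself, with no rescaling, lies in $\Lambda_q(L)+p^j\bv$:
\begin{itemize}
\item for $\ell\ne p$, the localizations $\Lambda_q(L)_\ell+p^j\bv$ and $L_\ell+\bv$ coincide;
\item at $p$, one has $\by\in L_p$ and $q\mid Q(\by)$; instability gives $\Lambda_q(L)_p=\{\bz\in L_p: Q(\bz)\equiv 0\pmod q\}$, and $p^j\bv\in\Lambda_q(L)_p$.
\end{itemize}

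Second, your key step is formulated for $Q(\bz)\in p^2\z_p$. But when $s_2=1$ one has $\lambda_p(L)=\Lambda_p(L)^{p^{-1}}$, so the integer to be represented is $pn$, which need not be divisible by $p^2$; your transfer paragraph silently uses $p^2n$ throughout. What is needed, and true, is the mod-$q$ statement. Each of the following represents $0$ modulo $q$ only by vectors whose unimodular coordinates are divisible by $p$: the rank-one unimodular part $\langle a_1\rangle$, the anisotropic plane $\langle a_1,a_2\rangle$, or (for $q=4$) the plane with $a_1\equiv a_2\pmod 4$. The same slip occurs in your primitivity paragraph. There the correct computation is $\mathfrak{n}(\Lambda_q(L)_p+p^j\bv)=\mathfrak{n}(\Lambda_q(L)_p)=p^s\z_p$ with $s\in\{1,2\}$. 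Finally, your worry about odd $2$-order disappears once you only use $q\mid p^sn$.
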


\begin{proof}
First, it is clear that $\lambda_q(L)$ is diagonal.
Let $s\in \{1,2\}$ be such that $\lambda_q(L)=\Lambda_q(L)^{p^{-s}}$.
Denote by $Q$ the quadratic map on $\q L$, so that $Q^{p^{-s}}$ is the quadratic map on $\q (\lambda_q(L))$.
Since $\Lambda_q(L)\subseteq L$ and $(p^j-1)\bv\in L$, we have
$$
\Lambda_q(L)+p^j\bv=\Lambda_q(L)+(p^j-1)\bv+\bv\subseteq L+\bv.
$$
Thus we have
$$
\min(L+\bv)\le \min(\Lambda_q(L)+p^j\bv)=p^s\min(\lambda_q(L)+p^j\bv).
$$

To prove that $\lambda_q(L)+p^j\bv$ is tight regular, let $n$ be an integer greater than or equal to $\min(\lambda_q(L)+p^j\bv)$ such that $n$ is locally represented by $\lambda_q(L)+p^j\bv$.
Then $p^s n$ is locally represented by $\Lambda_q(L)+p^j\bv$, and hence by $L+\bv$.
Moreover, $p^s n \ge \min(L+\bv)$.
Therefore, by assumption, $p^s n$ is (globally) represented by $L+\bv$.
This implies that there exists a vector $\bx \in L$ such that $Q(\bx+\bv)=p^sn$.

We assert that $\bx+\bv \in \Lambda_q(L)+p^j\bv$.
For a prime $r$, note that
$$
\Lambda_q(L)_r+p^j\bv=\begin{cases}L_r+\bv&\text{if}\ r\mid \tilde{c},\\
\Lambda_q(L)_p&\text{if}\ r=p,\\
L_r&\text{if}\ r\nmid p\tilde{c}.\end{cases}
$$
Hence, it is clear that $\bx+\bv \in \Lambda_q(L)_r+p^j\bv$ whenever $r\neq p$.
When $r=p$, the assumption that $L$ is diagonal $p$-unstable implies that
$$
\Lambda_q(L_p)=\{ \by \in L_p : Q(\by)\equiv 0\pmod q\}.
$$
Since $p^j\bv\in L_p$ and $Q(p^j\bv)$ is divisible by $q$, we conclude that $p^j\bv \in \Lambda_q(L_p)=\Lambda_q(L)_p$.
This shows that $\bx+\bv \in \Lambda_q(L)_r+p^j\bv$ for all primes $r$, and thus the assertion follows.
By this assertion, $p^s n$ is represented by $\Lambda_q(L)+p^j\bv$.
Therefore, $n$ is represented by $\lambda_q(L)+p^j\bv$.

Now, we show that $\mathfrak{n}(\lambda_q(L)+p^j\bv)=\z$.
Note that $\mathfrak{n}(L_r+\bv)=\z_r$ for all primes $r$, and in particular, $\mathfrak{n}(L_r)=\z_r$ whenever $r\nmid \tilde{c}$. 
Thus, $\mathfrak{n}(\lambda_q(L)_r+p^j\bv)=\z_r$ for every prime $r\neq p$.
Observe that
$$
\mathfrak{n}(\Lambda_q(L)_p+p^j\bv)=\mathfrak{n}(\Lambda_q(L)_p)=p^s\mathfrak{n}(\lambda_q(L)_p)=p^s\mathfrak{n}(L_p)=p^s\z_p.
$$
This shows that $\mathfrak{n}(\lambda_q(L)+p^j\bv)=\z$, which implies that $\lambda_q(L)+p^j\bv$ is primitive.
The conductor of $\lambda_q(L)+p^j\bv$ is clearly equal to $\tilde{c}$.
This completes the proof.
\end{proof}

%%%%%%%%%%%%%%%%%%%%%%%%%%%%%%%%%%%%%%%%%%%%%%%%%%%%%%%%%%%%%%%%%%%%%%%%%%%%%%%%%%%%%%%%%%

\begin{rmk}
Let $p$ be a prime with $(p,2c)=1$.
Let $a_1,a_2,a_3$ be positive integers such that $(p,a_1)=1$ and
$a_2\equiv a_3\equiv 0\!\pmod{p}$.
Define
$$
s=\begin{cases}
2 & \text{if } a_2\equiv a_3\equiv 0\!\pmod{p^2},\\
1 & \text{otherwise}.
\end{cases}
$$
Then
$$
\lambda_p(\langle a_1c^2,a_2c^2,a_3c^2\rangle)\simeq
\langle p^{2-s}a_1c^2,\,p^{-s}a_2c^2,\,p^{-s}a_3c^2\rangle.
$$
Let
$$
L=\z \be_1+\z \be_2+\z \be_3 \simeq \langle a_1c^2, a_2c^2, a_3c^2\rangle,
\quad
\bv=-\frac{d}{c}(\be_1+\be_2+\be_3).
$$
Let $j$ be the smallest positive integer such that $p^j\equiv 1\!\pmod{c}$.
Consider the $\z$-coset $\Lambda_p(L)+p^j\bv$.
Note that
$$
\Lambda_p(L)=\z(p\be_1)+\z\be_2+\z\be_3.
$$
For $(x_1,x_2,x_3)\in \z^3$, we have
\begin{align*}
&Q(x_1p\be_1+x_2\be_2+x_3\be_3+p^j\bv)\\
&=Q\Bigl(\bigl(px_1-p^j\tfrac{d}{c}\bigr)\be_1
+\bigl(x_2-p^j\tfrac{d}{c}\bigr)\be_2
+\bigl(x_3-p^j\tfrac{d}{c}\bigr)\be_3\Bigr)\\
&=a_1(pcx_1-p^jd)^2+a_2(cx_2-p^jd)^2+a_3(cx_3-p^jd)^2\\
&=p^2a_1(cx_1-p^{j-1}d)^2
+a_2(cx_2-p^jd)^2
+a_3(cx_3-p^jd)^2.
\end{align*}
Since the quadratic map on $\q(\lambda_p(L))$ is $Q^{p^{-s}}$, we obtain
\begin{align*}
&Q^{p^{-s}}(x_1p\be_1+x_2\be_2+x_3\be_3+p^j\bv)\\
&=p^{2-s}a_1(cx_1-p^{j-1}d)^2
+p^{-s}a_2(cx_2-p^jd)^2
+p^{-s}a_3(cx_3-p^jd)^2.
\end{align*}
If $p\not\equiv \pm1\!\pmod{c}$, then $p^{j-1}d\not\equiv \pm d\!\pmod{c}$.
Hence the representations of integers by the $\z$-coset
$\lambda_p(L)+p^j\bv$ do not correspond to those by the $m$-gonal form
$$
p^{2-s}a_1P_m(x_1)+p^{-s}a_2P_m(x_2)+p^{-s}a_3P_m(x_3).
$$
This phenomenon is not limited to the present case.
\end{rmk}

%%%%%%%%%%%%%%%%%%%%%%%%%%%%%%%%%%%%%%%%%%%%%%%%%%%%%%%%%%%%%%%%%%%%%%%%%%%%%%%%%%%%%%%%%%

\begin{lem} \label{lemz23}
Let $K$ be a $2$-stable diagonal ternary $\z$-lattice.
Let $u$ be an odd integer and $l$ be an arbitrary integer.
Then there exists an integer $\nu \in \{0,1,2\}$ such that 
$u(4n+\nu)+l$ is represented by $K_2$ for every integer $n$.
If, in addition, $K$ is $3$-stable and $(u,6)=1$, then there exists an integer 
$\nu \in \{0,1,2,3,4\}$ such that 
$u(12n+\nu)+l$ is represented by both $K_2$ and $K_3$ for every integer $n$.
\end{lem}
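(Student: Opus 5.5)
The plan is to reduce everything to the structure of $Q(K_2)$ and $Q(K_3)$ described just before Lemma \ref{lemlambda}. The key observation is that as $n$ varies, $u(4n+\nu)+l$ runs exactly over the residue class $u\nu+l \pmod 4$. So it suffices to find $\nu$ such that every element of $\z_2$ in that residue class lies in $Q(K_2)$. Since $u$ is odd, the three values $l,\ l+u,\ l+2u$ (for $\nu=0,1,2$) are pairwise distinct modulo $4$. Hence we have three of the four residue classes modulo $4$ at our disposal, and the task is to show that at most one residue class modulo $4$ is "bad" for $K_2$.

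\textbf{Non-exceptional case.} Here $K_2\not\simeq \begin{pmatrix}2&1\\1&2\end{pmatrix}\perp\langle\epsilon\rangle$. By the remark on $2$-stability, every element of $\z_2$ of even order, in particular every unit, lies in $Q(K_2)$. So the odd residues $1$ and $3 \pmod 4$ are both good. Among $l,l+u,l+2u$ modulo $4$ at least one is odd, and we take the corresponding $\nu$.

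\textbf{Exceptional case.} Now $Q(K_2)=\z_2\setminus\{(\epsilon+4)\gamma^2\}$. The excluded elements are units congruent to $\epsilon+4 \pmod 8$, elements of order $1$ in the class $2(\epsilon+4)$, and so on. I will check directly that the units congruent to $\epsilon+2 \pmod 4$ (units with $\gamma$ a unit) avoid the excluded set. I will also check that the class $2 \pmod 4$ is entirely good, because the binary component $\begin{pmatrix}2&1\\1&2\end{pmatrix}$ already represents $2\z_2^\times$. So at most the classes $0$ and $\epsilon \pmod 4$ can be bad, and classes $2$ and $\epsilon+2$ are always good. Among three distinct residues modulo $4$ one of these two must occur, and we pick the corresponding $\nu$.

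\textbf{Second assertion.} Now $K$ is also $3$-stable and $(u,6)=1$. By $3$-stability, $K_3$ represents all units of $\z_3$ whether it is isotropic or anisotropic. Hence $K_3$ represents $u(12n+\nu)+l$ whenever $u\nu+l\not\equiv 0\pmod 3$, which excludes exactly one residue class of $\nu$ modulo $3$. By the Chinese remainder theorem we need $\nu\in\{0,\dots,4\}$ such that $\nu \bmod 4$ is good for $K_2$ in the above sense and $3\nmid u\nu+l$. The five integers $0,1,2,3,4$ cover every residue modulo $4$, with residue $0$ covered twice by $0$ and $4$, which differ modulo $3$.

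The main obstacle is this last combination step. I will show that, in all cases, at least two residues modulo $4$ are good for $K_2$: two odd residues in the non-exceptional case, and the classes $2$ and $\epsilon+2$ in the exceptional case. With two good residues modulo $4$ we obtain at least two candidate values of $\nu$ in $\{0,\dots,4\}$. I then need to verify, by a short case check on $l \bmod 12$, that these candidates do not all fall into the single excluded class modulo $3$. If some configuration fails, I would refine the analysis of $Q(K_2)$ modulo $8$ to gain an additional good residue.
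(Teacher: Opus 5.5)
Your proposal follows essentially the paper's route: show that at least two residue classes modulo $4$ lie entirely in $Q(K_2)$, then combine this with $\z_3^\times\subseteq Q(K_3)$. Your case analysis of $Q(K_2)$ (odd classes in the non-exceptional case, classes $2$ and $\epsilon+2$ in the exceptional case) actually supplies the justification the paper leaves implicit.

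The final check you defer never fails, so no refinement modulo $8$ is needed. If one of the two good classes corresponds to $\nu\equiv 0 \pmod 4$, then both $\nu=0$ and $\nu=4$ work for $K_2$, and $u\cdot 0+l\not\equiv u\cdot 4+l \pmod 3$. Otherwise the two candidates are distinct elements of $\{1,2,3\}$, whose difference is not divisible by $3$. In both cases, since $(u,3)=1$, at least one of the two candidates gives a $3$-adic unit, which $K_3$ represents.
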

 
\begin{proof}
By assumption, since $K$ is 2-stable, at least two congruence classes modulo 4 are represented by $K_2$.
That is, there exist integers $k_1,k_2$ with $0\le k_1<k_2\le 3$ such that every integer $k\equiv k_1$ or $k_2 \pmod{4}$ is represented by $K_2$.
The first assertion follows immediately.

Now assume further that $K$ is 3-stable and that $(u,6)=1$.
Then $\z_3^{\times}\subset Q(K_3)$, so every integer that is not divisible by 3 and is congruent to $k_1$ or $k_2$ modulo 4 is represented by both $K_2$ and $K_3$.
Suppose that
\[
l\equiv k_1\pmod{4}\quad \text{or}\quad l\equiv k_2\pmod{4}.
\]
Then both of $u(12n+0)+l$ and $u(12n+4)+l$ are represented by $K_2$.
Since either $u(12n+0)+l$ or $u(12n+4)+l$ is not divisible by 3, at least one of them is represented by $K_3$.
Now, suppose that $l\not\equiv k_i\pmod 4$ for $i=1,2$.
Then there exist two distinct integers $\nu_1$ and $\nu_2$ with $1\le \nu_1,\nu_2\le 3$ such that
$$
u(12n+\nu_i)+l\equiv k_i\pmod 4
$$
for $i=1,2$.
Hence, $u(12n+\nu_i)+l$ is represented by $K_2$ for $i=1,2$.
Since
\[
u(12n+\nu_1)+l\not\equiv u(12n+\nu_2)+l\pmod 3,
\]
at least one of them is represented by $K_3$.
This completes the proof.
\end{proof}

%%%%%%%%%%%%%%%%%%%%%%%%%%%%%%%%%%%%%%%%%%%%%%%%%%%%%%%%%%%%%%%%%%%%%%%%%%%%%%%%%%%%%%%%%%

\begin{lem} \label{lem1}
Let $p$ be a prime greater than $3$, and let $u$ be a positive integer coprime to $p$.
Let $a_1,a_2,a_3,\alpha_1,\alpha_2,$ and $\alpha_3$ be positive integers.
Assume that the ternary diagonal $\z$-lattice $\langle a_1,a_2,a_3\rangle$ is isometric to $\langle 1,-\Delta_p\rangle \perp \langle p\epsilon_p\rangle$ over $\z_p$, where $\epsilon_p\in \z_p^{\times}$.
Then there exists an integer $v$ such that
\begin{enumerate}[(i)]
\item $0<v<p^2$,
\item $uv+a_1\alpha_1^2+a_2\alpha_2^2 \nra \langle a_1,a_2\rangle$ over $\z_p$,
\item $uv+\tail \ra \langle a_1,a_2,a_3\rangle$ over $\z_p$,
\item $\max\bigl(\text{\rm ord}_p(uv+a_1\alpha_1^2+a_2\alpha_2^2),\,\text{\rm ord}_p(uv+\tail)\bigr)\le 1$.
\end{enumerate}
\end{lem}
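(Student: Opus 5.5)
The plan is to work entirely over $\z_p$ and to use the explicit description of $Q(K_p)$ recalled before Lemma \ref{lemz23}. Put $A=a_1\alpha_1^2+a_2\alpha_2^2$, $x=uv+A$ and $y=x+a_3\alpha_3^2$. Since $u$ is a unit at $p$, the residue of $v$ modulo $p^2$ can be chosen so that $x$ has any prescribed residue modulo $p^2$. So it suffices to choose a target residue class for $x$ modulo $p^2$ for which (ii)--(iv) hold, and which is not $A$ modulo $p^2$ (so that the corresponding $v$ can be taken with $0<v<p^2$). Because $\langle a_1,a_2,a_3\rangle_p\simeq\langle 1,-\Delta_p\rangle\perp\langle p\epsilon_p\rangle$, exactly one of $a_1,a_2,a_3$ is divisible by $p$, and it has order exactly $1$. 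I split into two cases according to which coefficient this is.

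\emph{Case 1: $p\mid a_3$.} Then $\langle a_1,a_2\rangle_p\simeq\langle 1,-\Delta_p\rangle$ is anisotropic unimodular, so it represents no element of odd order. I aim for $x=pt$ with $t$ a unit, so (ii) holds and $\ord_p(x)=1$. Write $a_3=pa_3'$ with $a_3'$ a unit; then $\epsilon_p$ lies in the square class of $a_3'$, since $a_1a_2\in -\Delta_p(\z_p^\times)^2$ and discriminants agree. Then $y=p(t+a_3'\alpha_3^2)$, and by the description of $Q(K_p)$ in the $p$-stable anisotropic case, $y$ is represented with $\ord_p(y)=1$ as soon as $t+a_3'\alpha_3^2\equiv a_3's^2\pmod p$ for some unit $s$. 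So I take $t\equiv a_3'(s^2-\alpha_3^2)\pmod p$. This is a unit provided $s^2\not\equiv\alpha_3^2$, and different admissible square classes $s^2$ give different residues of $t$. Since $p>3$, there are $(p-1)/2\ge 2$ nonzero squares modulo $p$. Hence at least one value of $s^2$ avoids $s^2\equiv\alpha_3^2$, and I expect to need one more to also avoid $pt\equiv A\pmod{p^2}$. This is the delicate counting point: I expect it to hold because avoiding $pt\equiv A$ rules out at most one residue $t$, and if $p\mid\alpha_3$ the first condition is automatic. If the count is tight for $p=5$, I would instead exploit the freedom in $t$ modulo $p$ versus $x$ modulo $p^2$. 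Finally I take $v$ as the representative in $(0,p^2)$ of $u^{-1}(pt-A)\bmod p^2$, which then gives (i)--(iv).

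\emph{Case 2: $p$ divides $a_1$ or $a_2$}, say $\langle a_1,a_2\rangle_p\simeq\langle\eta\rangle\perp\langle p\epsilon'\rangle$ with $\eta$ a unit. A unit is represented by this binary lattice iff it lies in $\eta(\z_p^\times)^2$, while the ternary lattice represents all of $\z_p^\times$. So I aim for $x\equiv\eta\Delta_p s^2\pmod p$ with $s$ a unit, which gives (ii) and $\ord_p(x)=0$. I need $y\equiv\eta\Delta_p s^2+a_3\alpha_3^2\not\equiv0\pmod p$, which gives (iii) and (iv); this excludes at most one of the $(p-1)/2\ge2$ possible classes $s^2$. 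Here only the residue modulo $p$ matters, so I take $v\in(0,p)$ with $uv\equiv\eta\Delta_p s^2-A\pmod p$. This is possible as long as that residue is nonzero, which again excludes at most one further class of $s^2$. If $p\mid\alpha_3$ or $p\mid A$ one of these exclusions disappears.

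The main obstacle I anticipate is the small-prime bookkeeping, namely $p=5$, where only two nonzero square classes are available and two exclusions must be avoided simultaneously. If the counting fails there, I will use the fact that $v$ ranges modulo $p^2$ rather than modulo $p$. For example, in Case 2 I could take $v\equiv0\pmod p$ but $v\ne0$, so $v\in\{p,2p,\dots\}$. This keeps the residue of $x$ modulo $p$ equal to that of $A$ while still satisfying $0<v<p^2$. The same device should remove the condition $pt\not\equiv A\pmod{p^2}$ in Case 1.
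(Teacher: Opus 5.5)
Your construction of the target classes is the paper's: your Case 1 target $x\equiv a_3(s^2-\alpha_3^2)\pmod{p^2}$ covers both of the paper's subcases with $p\mid a_3$, and your Case 2 target $x\equiv\eta\Delta_p s^2\pmod p$ is the paper's choice of $a'$. What is not established is $v\neq 0$ in Case 1.

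Take $p=5$, $5\mid a_3$ and $5\nmid\alpha_3$. There are only two nonzero square classes, and $s^2\not\equiv\alpha_3^2$ leaves exactly one. So there is exactly one admissible class of $x$ modulo $25$. Your counting cannot rule out that this class equals $A \bmod 25$, which would force $v=0$.

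Your fallback does not help here. In Case 1, conditions (ii)--(iv) are determined exactly by $x\bmod p^2$: you need $\ord_p(x)=1$ and $y\in a_3(\z_p^\times)^2$. By (i), $v$ is determined by its residue modulo $p^2$. So there is no freedom between $t\bmod p$ and $x\bmod p^2$. Taking $v$ to be a nonzero multiple of $p$ moves $x$ to a different class modulo $p^2$ and loses the target. In Case 2 the device $v=p$ is legitimate, since only $x\bmod p$ matters there.

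The missing observation is the one the paper uses to conclude. $A=a_1\alpha_1^2+a_2\alpha_2^2$ is itself represented by $\langle a_1,a_2\rangle$ over $\z_p$, via $(\alpha_1,\alpha_2)$. Hence any $v\in[0,p^2)$ satisfying (ii) is automatically nonzero, because $v=0$ gives $x=A$.

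Concretely, in your cases:
\begin{itemize}
\item In Case 1, the anisotropic lattice $\langle 1,-\Delta_p\rangle$ represents only elements of even order. So $\ord_p(A)$ is even, and $A\not\equiv pt\pmod{p^2}$ for every unit $t$.
\item In Case 2, $A$ is either divisible by $p$ or lies in $\eta(\z_p^\times)^2$. So $A\not\equiv\eta\Delta_p s^2\pmod p$.
\end{itemize}

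Thus the second exclusion you count against never occurs. With this remark added, your proof goes through for all $p>3$.
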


\begin{proof}
First, suppose that $p$ divides both $a_3$ and $\alpha_3$.
Choose $v$ with $0\le v<p^2$ such that
\[
uv+a_1\alpha_1^2+a_2\alpha_2^2 \equiv a_3 \pmod{p^2}.
\]
Then $uv+a_1\alpha_1^2+a_2\alpha_2^2 \in p\z_p^{\times}$, and hence it is not represented by $\langle a_1,a_2\rangle$ over $\z_p$, since $\langle a_1,a_2\rangle \simeq \langle 1,-\Delta_p\rangle$ over $\z_p$.
On the other hand, by the Local Square Theorem,
\[
uv+\tail \in a_3(\z_p^{\times})^2,
\]
and hence it is represented by $\langle a_1,a_2,a_3\rangle$ over $\z_p$.

Next, suppose that $p$ divides $a_3$ but does not divide $\alpha_3$.
Since $p\ge 5$, there exists an integer $t$ with $1\le t\le p-1$ such that
\[
t \not\equiv \alpha_3^2 \pmod{p}
\quad \text{and} \quad
\left(\frac{t}{p}\right)=1.
\]
Choose $v$ with $0\le v<p^2$ such that
\[
uv+a_1\alpha_1^2+a_2\alpha_2^2 \equiv a_3(t-\alpha_3^2) \pmod{p^2}.
\]
Then $uv+a_1\alpha_1^2+a_2\alpha_2^2 \in p\z_p^{\times}$, and hence it is not represented by $\langle a_1,a_2\rangle$ over $\z_p$.
On the other hand,
\[
uv+\tail \equiv a_3 t \pmod{p^2},
\]
and hence
\[
uv+\tail \in a_3 t (\z_p^{\times})^2 = a_3(\z_p^{\times})^2.
\]
Therefore $uv+\tail$ is represented by $\langle a_1,a_2,a_3\rangle$ over $\z_p$.

Finally, suppose that $p$ divides $a_1a_2$.
Without loss of generality, assume that $p$ divides $a_2$.
Since $p\ge 5$, there exists an integer $a'$ with $1\le a'\le p-1$ such that
\[
a' \not\equiv -a_3\alpha_3^2 \pmod{p}
\quad \text{and} \quad
\left(\frac{a'}{p}\right) = -\left(\frac{a_1}{p}\right).
\]
Choose $v$ with $0\le v\le p-1$ such that
\[
uv+a_1\alpha_1^2+a_2\alpha_2^2 \equiv a' \pmod{p}.
\]
Then $uv+a_1\alpha_1^2+a_2\alpha_2^2 \in a_1\Delta_p \z_p^{\times}$, and hence it is not represented by $\langle a_1,a_2\rangle$ over $\z_p$.
On the other hand,
\[
uv+\tail \in \z_p^{\times},
\]
and hence it is represented by $\langle a_1,a_2,a_3\rangle$ over $\z_p$.

Note that $v \ne 0$ in all of the above cases, since $u\cdot 0 + a_1\alpha_1^2 + a_2\alpha_2^2$ is represented by $\langle a_1,a_2\rangle$ over $\z_p$.
This completes the proof.
\end{proof}

%%%%%%%%%%%%%%%%%%%%%%%%%%%%%%%%%%%%%%%%%%%%%%%%%%%%%%%%%%%%%%%%%%%%%%%%%%%%%%%%%%%%%%%%%%

\begin{lem} \label{lem2}
Let $s>1$ be an integer, and let $5\le p_1<p_2<\cdots <p_s$ be prime numbers. 
Let $u$ be an integer with $(u,p_1p_2\cdots p_s)=1$, and let $v$ be an arbitrary integer.
Then there exists an integer $n$ with $0\le n<(s+4)2^{s-2}$ such that 
$(un+v,p_1p_2\cdots p_s)=1$.
\end{lem}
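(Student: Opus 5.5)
The plan is to reduce the statement to a sieve problem on consecutive integers, then bound the number of ``bad'' integers.

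Since $(u,p_i)=1$, for each $i$ the congruence $un+v\equiv 0 \pmod{p_i}$ holds for exactly one residue class $r_i \pmod{p_i}$ of $n$. So the statement is equivalent to the following: in the interval $\{0,1,\dots,N-1\}$ with $N=(s+4)2^{s-2}$, there is an $n$ avoiding all $s$ prescribed classes $r_i \bmod p_i$. This is a Jacobsthal-type problem. I would count the survivors $S(N)=\#\{0\le n<N : n\not\equiv r_i \pmod{p_i}\ \text{for all } i\}$ and show $S(N)>0$, using two different bounds depending on the size of $s$.

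For small $s$ I would use the union bound. The number of $n<N$ lying in the class $r_i$ is at most $\lceil N/p_i\rceil\le N/p_i+1$. Hence
$$
S(N)\ \ge\ N\Bigl(1-\sum_{i=1}^s \tfrac1{p_i}\Bigr)-s .
$$
The worst case is when $p_1,\dots,p_s$ are the $s$ smallest primes $\ge 5$, since replacing primes by smaller ones only increases $\sum 1/p_i$. For these primes $\sum 1/p_i<1$ as long as the primes stay below roughly $120$, i.e.\ for $s$ up to about $27$. In that range $N(1-\sum 1/p_i)>s$ holds with a huge margin, because $N$ grows like $2^s$. For $s=2$, where $N=6$, one checks directly: $6(1-\tfrac15-\tfrac17)-2>0$. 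Small $s$ can also be checked by hand if the margin is thin.

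For large $s$ I would use Legendre's sieve (inclusion–exclusion) over the squarefree divisors $e\mid p_1\cdots p_s$:
$$
S(N)=\sum_{e}\mu(e)\Bigl(\tfrac{N}{e}+\theta_e\Bigr),\qquad |\theta_e|\le 1,\ \theta_1=0,
$$
so that $S(N)\ge N\prod_i(1-1/p_i)-(2^s-1)$. With $N=(s+4)2^{s-2}$, it then suffices to show
$$
\frac{s+4}{4}\prod_{i=1}^s\Bigl(1-\frac1{p_i}\Bigr)>1 .
$$
Again the product is smallest when the $p_i$ are the first $s$ primes $\ge 5$. An explicit Mertens bound (Rosser--Schoenfeld) gives $\prod_{5\le p\le x}(1-1/p)\ge 3e^{-\gamma}/(\log x\,(1+1/\log^2 x))$. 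Combined with $p_s\le (s+2)(\log(s+2)+\log\log(s+2))$, this yields the inequality once $s$ exceeds an explicit threshold.

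The main obstacle is making the two ranges overlap. The union bound fails once $\sum 1/p_i\ge 1$, and the crude Legendre bound needs $s$ large enough that $(s+4)/4$ beats $\log p_s$ up to constants. If the explicit Mertens estimate leaves a gap, I would first improve the union-bound side via Bonferroni's inequality truncated at pairs: subtract $\sum_i(N/p_i+1)$ and add back $\sum_{i<j}(N/(p_ip_j)-1)$. This pushes the small-$s$ range further. Any remaining intermediate values of $s$ can be verified directly by computing $\prod(1-1/p)$ over the first $s$ primes $\ge 5$.
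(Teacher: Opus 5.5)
Your proof is correct. The paper gives no argument of its own here: it simply invokes Lemma~3 of [KKO] together with the hypothesis $p_1\ge 5$. So your sieve argument is a self-contained alternative rather than a reconstruction of the paper's proof.

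Both of your regimes work. The union bound covers exactly $2\le s\le 26$, since the sum of $1/p$ over $5\le p\le 109$ already exceeds $1$, and it does so with a large margin. Legendre's sieve with the Rosser--Schoenfeld estimates covers everything beyond.

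However, you do not need two regimes, the explicit Mertens and $p_s$ bounds, or the Bonferroni fallback. The overlap problem you anticipate does not arise. Your Legendre step alone settles every $s\ge 2$ by an elementary observation. The $p_i$ are distinct integers with $p_1\ge 5$, so $p_i\ge i+4$, and therefore
\[
\prod_{i=1}^{s}\Bigl(1-\frac{1}{p_i}\Bigr)\ \ge\ \prod_{i=1}^{s}\frac{i+3}{i+4}=\frac{4}{s+4}.
\]
Hence $N\prod_i(1-1/p_i)\ge (s+4)2^{s-2}\cdot\frac{4}{s+4}=2^s$, and $S(N)\ge 2^s-(2^s-1)=1$.

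This telescoping bound is exactly what produces the constant $(s+4)2^{s-2}$, and it is where the hypothesis $p_1\ge 5$ enters. With $p_1\ge 2$ or $p_1\ge 3$, the same computation yields $(s+1)2^{s}$ or $(s+2)2^{s-1}$.

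In short, the citation buys brevity, and your version buys self-containedness at the price of explicit analytic prime-number estimates. The streamlined sieve gives a two-line elementary proof that also explains the shape of the bound.
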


\begin{proof}
This follows from \cite[Lemma~3]{KKO} together with the assumption that $p_1\ge 5$.
\end{proof}

%%%%%%%%%%%%%%%%%%%%%%%%%%%%%%%%%%%%%%%%%%%%%%%%%%%%%%%%%%%%%%%%%%%%%%%%%%%%%%%%%%%%%%%%%%

For $i=1,2,\dots$, let $r_i$ denote the $i$-th smallest element of $P$, so that $r_1=5$, $r_2=7$, $r_3=11$, etc.
In the following proposition, the 13 inequalities are listed in the order in which they are used in the proof of Theorem~\ref{thmmain}.

%%%%%%%%%%%%%%%%%%%%%%%%%%%%%%%%%%%%%%%%%%%%%%%%%%%%%%%%%%%%%%%%%%%%%%%%%%%%%%%%%%%%%%%%%%

\begin{prop} \label{propproduct}
Let $t$ be an integer.
\begin{enumerate} [(i)]
\item If $t\ge 16$, then we have $r_7r_8\cdots r_t>((t+3)2^{t-3})^3$.
\item If $t\ge 7$, then we have $r_3r_4\cdots r_t>45\cdot 49\cdot (t+3)2^{t-3}$.
\item If $t\ge 5$, then we have $r_3r_4\dots r_t>2\cdot 18\cdot (t+3)2^{t-3}$.
\item If $t\ge 18$, then we have $r_7r_8\cdots r_t>(3(t+3)2^{t-3}-1)^3$.
\item If $t\ge 9$, then we have $r_3r_4\cdots r_t>142\cdot 304\cdot (3(t+3)2^{t-3}-1)$.
\item If $t\ge 8$, then we have $r_3r_4\cdots r_t>49\cdot 142\cdot (3(t+3)2^{t-3}-1)$.
\item If $t\ge 7$, then we have $r_3r_4\cdots r_t>9\cdot 88\cdot (3(t+3)2^{t-3}-1)$.
\item If $t\ge 18$, then we have $r_7r_8\cdots r_t>(4(t+3)2^{t-3}-1)^3$.
\item If $t\ge 9$, then we have $r_3r_4\cdots r_t>190\cdot 294\cdot (4(t+3)2^{t-3}-1)$.
\item If $t\ge 8$, then we have $r_3r_4\cdots r_t>66\cdot 110\cdot (4(t+3)2^{t-3}-1)$.
\item If $t\ge 7$, then we have $r_3r_4\cdots r_t>3\cdot 90\cdot (4(t+3)2^{t-3}-1)$.
\item If $t\ge 20$, then we have $r_7r_8\cdots r_t>(12(t+3)2^{t-3}-7)^3$.
\item If $t\ge 10$, then we have $r_3r_4\cdots r_t>580\cdot 1492\cdot (12(t+3)2^{t-3}-7)$.
\end{enumerate}
\end{prop}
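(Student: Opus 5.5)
Each of the thirteen inequalities has the form
$$
\prod_{i=a}^{t} r_i > F(t) \quad (a\in\{3,7\}),
$$
where $F(t)$ is either $(\kappa(t+3)2^{t-3}-\eta)^3$ or $C\cdot(\kappa(t+3)2^{t-3}-\eta)$, with explicit constants $\kappa\in\{1,3,4,12\}$, $\eta\ge 0$ and $C>0$. I will prove all of them by induction on $t$, using the same two-step scheme for each item.

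\textbf{Base case.} At the stated threshold $t_0$, I verify the inequality by direct computation. For instance, in (i) with $t_0=16$ we have $r_{16}=59$, and $r_7\cdots r_{16}=19\cdot 23\cdots 59$ must be compared with $(19\cdot 2^{13})^3$. These are finite numerical checks with the explicit primes $r_i$. The thresholds are presumably chosen so that they hold, with the margin smallest at $t_0$.

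\textbf{Inductive step.} Passing from $t$ to $t+1$ multiplies the left side by $r_{t+1}$. On the right side, write $g(t)=(t+3)2^{t-3}$; then
$$
\frac{g(t+1)}{g(t)}=\frac{2(t+4)}{t+3}\le 2\cdot\frac{t_0+4}{t_0+3}<\frac{5}{2}
$$
for $t\ge t_0\ge 5$. Subtracting $\eta$ only helps, since $(\kappa g(t+1)-\eta)/(\kappa g(t)-\eta)\le \kappa g(t+1)/(\kappa g(t))$ whenever $\kappa g(t)>\eta$, which is clear. This gives the following sufficient conditions on $r_{t+1}$:
\begin{itemize}
\item For the linear items (ii), (iii), (v), (vi), (vii), (ix), (x), (xi), (xiii), it suffices that $r_{t+1}\ge 5/2$, which holds trivially since $r_{t+1}\ge 5$.
\item For the cubic items (i), (iv), (viii), (xii), it suffices that $r_{t+1}\ge (2(t+4)/(t+3))^3$, and this quantity is below $(5/2)^3<16$. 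Since $t+1\ge 17$ gives $r_{t+1}\ge 67$, the condition holds easily.
\end{itemize}
So in every case the induction closes once the base case holds.

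\textbf{Main obstacle.} The real work is the base-case arithmetic. This is especially true for the cubic items, where the margin at $t_0$ may be tight: the thresholds $16$, $18$, $18$, $20$ were presumably chosen minimally. I would tabulate $r_t$ up to $t=20$ (that is, up to $r_{20}=79$) together with the partial products, and check each base case explicitly.

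If a base case fails narrowly, the plan falls back on the same induction, since it is uniform and only needs one verified starting value. Concretely, I would verify a slightly larger $t$ directly and check the few intermediate values of $t$ by hand.
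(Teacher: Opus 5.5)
Your strategy is the paper's: verify each inequality at its threshold, then induct by comparing the growth of the right-hand side with $r_{t+1}$. The paper writes out only (i), using $((t+4)2^{t-2})^3/((t+3)2^{t-3})^3\le(40/19)^3<10<r_{t+1}$. It also notes that (ii), (xi), (vi), (v), (iv) follow at once from (vii), (vii), (x), (ix), (viii) respectively, so only eight base cases need checking. However, two steps in your write-up are wrong as stated.

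\textbf{Subtracting $\eta$.} Subtracting $\eta$ does not help. With $A=\kappa g(t)<B=\kappa g(t+1)$ and $\eta>0$,
\[
\frac{B-\eta}{A-\eta}-\frac{B}{A}=\frac{\eta(B-A)}{A(A-\eta)}>0,
\]
so the ratio goes up, not down. The slack makes this easy to repair. Compare directly using $B\le\frac94A$ for $t\ge5$:
\[
5(A-\eta)-(B-\eta)\ge\tfrac{11}{4}A-4\eta>0 .
\]
This holds because $A\ge32$ and $\eta\le7$, and it closes the linear items since $r_{t+1}\ge5$. For the cubic items,
\[
4(A-\eta)-(B-\eta)\ge\tfrac74A-3\eta>0 ,
\]
and together with $r_{t+1}\ge67>4^3$ this closes the induction.

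\textbf{Indexing.} Your $r_i$ are shifted by one: $r_6=19$, $r_7=23$, $r_{15}=59$, $r_{16}=61$. So the base product in (i) is
\[
23\cdot29\cdots61=12091972151626183 .
\]
This is not cosmetic. The product you wrote is
\[
19\cdot23\cdots59=3766351981654057<3770775127457792=(19\cdot2^{13})^3,
\]
so with your indexing the base case of (i) would actually fail.

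\textbf{The fallback.} Your fallback is also not available. The statement asserts the inequality for every $t\ge t_0$, so a failure at $t_0$ would refute it, not merely move the start of the induction. With correct indexing, all thirteen base cases do hold. The tightest margins are about a factor of $2$: in (iii), $2431$ vs.\ $1152$; in (xiii), $35336848261$ vs.\ $17273450960$.
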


\begin{proof}
Since the proofs are quite similar, we provide only the proof of (i).
For $t=16$, the inequality can be verified by direct computation:
\begin{align*}
r_7r_8\cdots r_{16}=12091972151626183(\simeq 1.2\cdot 10^{16}),\\
((16+3)2^{16-3})^3=3770775127457792(\simeq 3.8\cdot 10^{15}).
\end{align*}
For any integer $u \ge 16$, we have
$$
\dfrac{((u+4)2^{u-2})^3}{((u+3)2^{u-3})^3}\le \left( \dfrac{40}{19}\right)^3<10<r_{u+1}.
$$
Hence, by induction,
$$
r_7r_8\cdots r_t>((t+3)2^{t-3})^3
$$
for all integers $t \ge 16$.
Note that the five implications (1) (vii) $\Rightarrow$ (ii), (2) (vii) $\Rightarrow$ (xi), (3) (x) $\Rightarrow$ (vi), (4) (ix) $\Rightarrow$ (v) and (5) (viii) $\Rightarrow$ (iv) are immediate.
\end{proof}

%%%%%%%%%%%%%%%%%%%%%%%%%%%%%%%%%%%%%%%%%%%%%%%%%%%%%%%%%%%%%%%%%%%%%%%%%%%%%%%%%%%%%%%%%%

\begin{lem} \label{lemzp}
Let $p$ be an odd prime, let $u$ be a positive integer coprime to $p$, and let $v$ be an arbitrary integer.
Let $L$ be a diagonal $p$-stable ternary $\z$-lattice.
Then, for any positive integer $s$, we have
$$
\left| \{ 1\le n\le p^s : un+v \nra L \ \text{over } \z_p \} \right|
\le
\begin{cases}
\dfrac{p^s+p+2}{2p+2} & \text{if } s\equiv 1 \pmod{2}, \\[1em]
\dfrac{p^s+2p+1}{2p+2} & \text{if } s\equiv 0 \pmod{2}.
\end{cases}
$$
\end{lem}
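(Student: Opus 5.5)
The plan is to reduce to a count of residue classes modulo $p^s$. By the definition of $p$-stability there are two cases. If $\langle 1,-1\rangle \ra L_p$, then $Q(L_p)=\z_p$, the set in question is empty, and the bound holds trivially. So the substantive case is
$$
L_p\simeq \langle 1,-\Delta_p\rangle\perp\langle p\epsilon_p\rangle,
\qquad
Q(L_p)=\z_p\smallsetminus\{p\epsilon_p\Delta_p\gamma^2:\gamma\in\z_p\}.
$$
In this case an element $x\in\z_p$ fails to be represented exactly when $x=0$ is excluded by nothing else and either $x=0$, or $k:=\ord_p(x)$ is odd and the unit $p^{-k}x$ lies in the fixed square class $\epsilon_p\Delta_p(\z_p^\times)^2$.

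Since $(u,p)=1$, the map $n\mapsto un+v$ permutes the residues modulo $p^s$. As $n$ runs over $1,\dots,p^s$, the values $un+v$ therefore run over a complete residue system modulo $p^s$, each class hit exactly once. It thus suffices to bound the number of classes $x \bmod p^s$ containing the chosen representative $un+v$ that is not represented.

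I would split the classes according to $k=\ord_p(x)$ when $x\not\equiv 0\pmod{p^s}$. For $k<s$, the unit part $p^{-k}x$ is determined modulo $p^{s-k}$, with $s-k\ge 1$. Its square class is then determined by its residue modulo $p$, since $p$ is odd. Hence the condition ``not represented'' depends only on the class, and it holds exactly for $k$ odd with $p^{-k}x$ in the prescribed nonsquare-type class.

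The number of such classes with a given odd $k<s$ is $\frac{p-1}{2}\,p^{\,s-k-1}$. The single class $x\equiv 0\pmod{p^s}$ contributes at most $1$, regardless of which integer $un+v$ actually lands there.

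Summing over odd $k<s$ gives the following.
\begin{itemize}
\item For $s$ odd: $\frac{p-1}{2}(p+p^3+\cdots+p^{s-2})=\frac{p^s-p}{2p+2}$. Adding $1$ yields $\frac{p^s+p+2}{2p+2}$.
\item For $s$ even: $\frac{p-1}{2}(1+p^2+\cdots+p^{s-2})=\frac{p^s-1}{2p+2}$. Adding $1$ yields $\frac{p^s+2p+1}{2p+2}$.
\end{itemize}

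The only delicate point is justifying that representability by $L_p$ is constant on each residue class modulo $p^s$ when $\ord_p<s$. This follows from the Local Square Theorem applied to the unit part, using $s-k\ge1$ and $p$ odd. The zero class is handled crudely by the bound $1$, and the rest is a geometric-series computation.
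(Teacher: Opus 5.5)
Your proof is correct. Residue classes modulo $p^s$ with $\ord_p=k<s$ have well-defined representability by $L_p$; for each odd $k<s$ exactly $\frac{p-1}{2}p^{s-k-1}$ of them are non-represented, and adding at most $1$ for the zero class gives precisely both stated bounds. The paper gives no argument of its own here, only a citation to Lemma 2.5 of [K1], so your residue count is a self-contained proof of the cited bound.

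The one blemish is your garbled description of the non-represented set. Since $0=Q(\mathbf{0})$ is represented, the non-represented elements are exactly the $x\neq 0$ with $\ord_p(x)$ odd and $p^{-\ord_p(x)}x\in\epsilon_p\Delta_p(\z_p^\times)^2$. This is harmless, because you bound the zero class by $1$ anyway.
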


\begin{proof}
See \cite[Lemma 2.5]{K1}.
\end{proof}

%%%%%%%%%%%%%%%%%%%%%%%%%%%%%%%%%%%%%%%%%%%%%%%%%%%%%%%%%%%%%%%%%%%%%%%%%%%%%%%%%%%%%%%%%%

Let $p$ be an odd prime.
For a positive integer $s$, define
$$
\psi_p(p^s)=\begin{cases}
\dfrac{p^s+p+2}{2p+2}&\text{if}\ \ s\ \text{is odd},\\[0.7em]
\dfrac{p^s+2p+1}{2p+2}&\text{if}\ \ s\ \text{is even}.\end{cases}
$$
Let $n\in \n$, and let $b_eb_{e-1}\cdots b_0{}_{(p)}$ be the base-$p$ representation of $n$, that is,
$$
n=b_ep^e+b_{e-1}p^{e-1}+\cdots+b_1p+b_0,
$$
where the $b_i$ are integers satisfying $0\le b_i\le p-1$ and $b_e\neq 0$.
We define
$$
\psi_p(n)=\begin{cases}\sum_{1\le s\le e} b_s\psi_p(p^s)&\text{if}\ \ b_0=0,\\
1+\sum_{1\le s\le e} b_s\psi_p(p^s)&\text{otherwise}.\end{cases}
$$
Then, by Lemma \ref{lemzp}, we have
\begin{equation} \label{eqpsi}
\vert \{ 1\le n\le n' : un+v\nra L\ \text{over}\ \z_p\} \le \psi_p(n')
\end{equation}
for any diagonal $p$-stable ternary $\z$-lattice $L$ and any positive integer $n'$.

%%%%%%%%%%%%%%%%%%%%%%%%%%%%%%%%%%%%%%%%%%%%%%%%%%%%%%%%%%%%%%%%%%%%%%%%%%%%%%%%%%%%%%%%%%

Recall that $P$ denotes the set of all primes greater than or equal to 5.

%%%%%%%%%%%%%%%%%%%%%%%%%%%%%%%%%%%%%%%%%%%%%%%%%%%%%%%%%%%%%%%%%%%%%%%%%%%%%%%%%%%%%%%%%%

\begin{lem} \label{lempsi}
Let $n$ be a positive integer.
Under the notation given above, we have
\begin{enumerate} [(i)]
\item $\vert\{ p\in P : \psi_p(n)>1\}\vert<\infty$,
\vskip 5pt
\item $\psi_p(n)\le \left\lceil \dfrac{n}{p}\right\rceil$ for any $p\in P$, where $\lceil \cdot \rceil$ denotes the ceiling function.
\end{enumerate}
\end{lem}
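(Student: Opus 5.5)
Both parts come directly from the definition of $\psi_p$ through the base-$p$ expansion of $n$. No local representation theory is needed: Lemma \ref{lemzp} only motivates the definition and plays no role in the proof.

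For (i), I will show that $\psi_p(n)=1$ whenever $p>n$. In that case the base-$p$ representation of $n$ has a single digit, so $e=0$ and $b_0=n\neq 0$. The sum $\sum_{1\le s\le e}b_s\psi_p(p^s)$ is empty, and the definition gives $\psi_p(n)=1$. Hence $\{p\in P:\psi_p(n)>1\}$ is contained in the finite set of primes at most $n$.

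For (ii), the key step is the elementary bound
$$
\psi_p(p^s)\le p^{s-1}\quad\text{for every } s\ge 1.
$$
\begin{itemize}
\item If $s$ is odd, the bound is equivalent to $p^s+p+2\le 2p^s+2p^{s-1}$, that is, $p+2\le p^s+2p^{s-1}$. This holds, with equality at $s=1$.
\item If $s$ is even, then $s\ge 2$, and the bound is equivalent to $2p+1\le p^s+2p^{s-1}$. This holds since $2p^{s-1}\ge 2p$ and $p^s\ge 1$.
\end{itemize}

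Summing against the digits then gives
$$
\sum_{1\le s\le e}b_s\psi_p(p^s)\le \sum_{1\le s\le e}b_sp^{s-1}=\frac{n-b_0}{p}=\left\lfloor \frac{n}{p}\right\rfloor.
$$
\begin{itemize}
\item If $b_0=0$, then $p\mid n$, so $\psi_p(n)\le n/p=\lceil n/p\rceil$.
\item If $b_0\neq 0$, then $p\nmid n$, so $\psi_p(n)\le 1+\lfloor n/p\rfloor=\lceil n/p\rceil$.
\end{itemize}
This also covers $e=0$, where the sum is empty.

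The only step requiring any care is the inequality $\psi_p(p^s)\le p^{s-1}$ in the two parity cases, together with noting that equality can occur at $s=1$. It is routine, and I expect no real obstacle.
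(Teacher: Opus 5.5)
Your proof is correct. The paper does not prove this lemma itself and simply refers to \cite[Lemma~2.6]{K1}, so your argument is a self-contained substitute for that citation, worked directly from the definition of $\psi_p$.

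For (i), you observe that $\psi_p(n)=1$ whenever $p>n$. For (ii), the digitwise bound $\psi_p(p^s)\le p^{s-1}$ (with equality only at $s=1$, i.e.\ $\psi_p(p)=1$) gives $\psi_p(n)\le \lfloor n/p\rfloor$ when $b_0=0$ and $\psi_p(n)\le 1+\lfloor n/p\rfloor$ otherwise. In both cases this equals $\lceil n/p\rceil$.

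Nothing is missing. As you note, no local representation theory enters, and the argument in fact works for every odd prime $p$, not just $p\in P$.
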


\begin{proof}
See \cite[Lemma 2.6]{K1}.
\end{proof}

For positive integers $n$ and $s$, we define
$$
\eta(n,s)=\min \left\{ n-\sum_{p\in P'}\psi_p(n) : P'\subset P,\ \vert P'\vert =s \right\}.
$$
By Lemma \ref{lempsi}(i), the value $\eta(n,s)$ is well defined for all positive integers $n$ and $s$.
We illustrate how to compute $\eta(48,8)$ as an example.
First, we compute $\psi_p(48)$ for several primes $p$:
$$
\begin{array}{c}
\psi_5(48)=8,\ \ \psi_7(48)=7,\ \ \psi_{11}(48)=5,\ \ \psi_{13}(48)=4,\\[0.3em]
\psi_{17}(48)=3,\ \ \psi_{19}(48)=3,\ \ \psi_{23}(48)=3,\ \ \psi_{29}(48)=2.
\end{array}
$$
Meanwhile, by Lemma \ref{lempsi}(ii), we have $\psi_p(48)\le 2$ for every prime $p>29$.
Hence,
$$
\eta(48,8)=48-(8+7+5+4+3+3+3+2)=13.
$$
We list the values of $\eta(n,s)$ for several pairs $(n,s)$ in Table \ref{tableeta}.

%%%%%%%%%%%%%%%%%%%%%%%%%%%%%%%%%%%%%%%%%%%%%%%%%%%%%%%%%%%%%%%%%%%%%%%%%%%%%%%%%%%%%%%%%%

\begin{table} [ht]
\begin{tabular}{c|c|c|c}
\hline
$\eta(17,8)=2$ & $\eta(19,6)=5$ & $\eta(20,4)=9$ & $\eta(25,6)=9$\\[0.2em] 
$\eta(25,7)=7$ & $\eta(28,8)=7$ & $\eta(30,7)=9$ & $\eta(48,8)=13$\\[0.2em]
$\eta(49,15)=5$ & $\eta(49,17)=3$ & $\eta(50,15)=7$ & $\eta(50,19)=3$\\[0.2em]
$\eta(60,9)=19$ & $\eta(74,17)=7$ & $\eta(102,17)=17$ & $\eta(125,19)=25$\\
\hline
\end{tabular}
\caption{$\eta(n,s)$ for some pair $(n,s)$}
\label{tableeta}
\end{table}

%%%%%%%%%%%%%%%%%%%%%%%%%%%%%%%%%%%%%%%%%%%%%%%%%%%%%%%%%%%%%%%%%%%%%%%%%%%%%%%%%%%%%%%%%%
%%%%%%%%%%%%%%%%%%%%%%%%%%%%%%%%%%%%%%%%%%%%%%%%%%%%%%%%%%%%%%%%%%%%%%%%%%%%%%%%%%%%%%%%%%
%%%%%%%%%%%%%%%%%%%%%%%%%%%%%%%%%%%%%%%%%%%%%%%%%%%%%%%%%%%%%%%%%%%%%%%%%%%%%%%%%%%%%%%%%%
%%%%%%%%%%%%%%%%%%%%%%%%%%%%%%%%%%%%%%%%%%%%%%%%%%%%%%%%%%%%%%%%%%%%%%%%%%%%%%%%%%%%%%%%%%
\section{A Proof of the Main Theorem}
%%%%%%%%%%%%%%%%%%%%%%%%%%%%%%%%%%%%%%%%%%%%%%%%%%%%%%%%%%%%%%%%%%%%%%%%%%%%%%%%%%%%%%%%%%
%%%%%%%%%%%%%%%%%%%%%%%%%%%%%%%%%%%%%%%%%%%%%%%%%%%%%%%%%%%%%%%%%%%%%%%%%%%%%%%%%%%%%%%%%%
%%%%%%%%%%%%%%%%%%%%%%%%%%%%%%%%%%%%%%%%%%%%%%%%%%%%%%%%%%%%%%%%%%%%%%%%%%%%%%%%%%%%%%%%%%
%%%%%%%%%%%%%%%%%%%%%%%%%%%%%%%%%%%%%%%%%%%%%%%%%%%%%%%%%%%%%%%%%%%%%%%%%%%%%%%%%%%%%%%%%%

Recall the definitions of $\delta,c,d$, and $\mu$ for $m\in \z_{\ge 3}$.

%%%%%%%%%%%%%%%%%%%%%%%%%%%%%%%%%%%%%%%%%%%%%%%%%%%%%%%%%%%%%%%%%%%%%%%%%%%%%%%%%%%%%%%%%%

\begin{prop} \label{propcorres}
Let $m\ge 3$ be an integer, and let
$$
f=f(x_1,x_2,x_3)=a_1P_m(x_1)+a_2P_m(x_2)+a_3P_m(x_3)
$$
be a ternary $m$-gonal form.
Let $L=\z \be_1+\z \be_2+\z \be_3$ be a $\z$-lattice whose Gram matrix is given by $(B(\be_i,\be_j))_{1\le i,j\le 3}=\langle a_1c^2,a_2c^2,a_3c^2\rangle$ and let $\bv=-\dfrac{d}{c}(\be_1+\be_2+\be_3)\in \q L$.
Then $f$ is regular if and only if the $\z$-coset $L+\bv$ is tight regular.
\end{prop}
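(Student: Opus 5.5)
The plan is to set up a bijection between the two representation problems and then check that each condition in the two definitions of regularity matches under it. Write $A=a_1+a_2+a_3$ and $\phi(n)=\mu n+d^2A$. The identity
$$
a_1P_m(x_1)+a_2P_m(x_2)+a_3P_m(x_3)=n \iff Q(x_1\be_1+x_2\be_2+x_3\be_3+\bv)=\phi(n)
$$
holds for $(x_1,x_2,x_3)$ in any commutative ring $R\supseteq\z$ in which the expansion makes sense. This is because $\mu P_m(x)+d^2=(cx-d)^2$ is a polynomial identity: using $\mu=\delta c$, $c=\delta(m-2)/2$ and $d=\delta(m-4)/4$, both sides equal $c^2x^2-2cdx+d^2$. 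I will apply it with $R=\z$ and $R=\z_p$.

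\emph{Step 1 (global).} Over $\z$ the identity gives: $n\to f$ if and only if $\phi(n)\to L+\bv$. Moreover, every value $Q(\bx+\bv)$ with $\bx\in L$ equals $\mu f(\bx)+d^2A$ for an integer $f(\bx)\ge 0$. Hence every integer represented by $L+\bv$ lies in $\phi(\n_0)$, and $\min(L+\bv)=\mu\min(f)+d^2A=d^2A$, since $f(0,0,0)=0$ and $f\ge 0$ on $\z^3$. Thus the integers $N\ge\min(L+\bv)$ that are represented are exactly the $\phi(n)$ with $n\ge 0$ represented by $f$.

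\emph{Step 2 (local).} This is the main obstacle: I must show that for $n\ge 0$, $n$ is locally represented by $f$ if and only if $\phi(n)$ is locally represented by $L+\bv$. I also must show that an integer $N\ge d^2A$ not of the form $\phi(n)$ is not locally represented by $L+\bv$, so that such $N$ impose no condition.

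For $p\nmid\mu$, $\mu$ is a unit, and $x\mapsto cx-d$ is a bijection of $\z_p$ when $p\nmid c$. Since $\gcd(c,d)=1$, if $p\mid c$ then $p\nmid d$, and $\bv\in L_p$ when $p\nmid c$. So the local statements coincide after the identity.

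For $p\mid\mu$, i.e.\ $p\mid\delta c$, I will check that a local solution $Q(\bx_p+\bv)=N$ with $\bx_p\in L_p$ forces $N\equiv d^2A\pmod{\mu\z_p}$. Then $n_p=(N-d^2A)/\mu\in\z_p$, and $f(\bx_p)=n_p$ over $\z_p$ by the identity.

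The congruence comes from working prime by prime. We have $(cx-d)^2\equiv d^2\pmod{c}$, and the $\delta$-part comes from the choice of $\delta$. When $m$ is odd, $c$ and $d$ are odd, and $(cx-d)^2-d^2=c x(cx-2d)$ with $cx-2d\equiv cx\pmod 2$, giving divisibility by $\delta c=4c$; this needs $x(cx-2d)\in 4\z_2$ or the corresponding statement, which I will verify. The cases $m\equiv 2\pmod 4$ and $m\equiv 0\pmod 4$ are handled analogously.

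Combining these, for $N$ in the image $\phi(\z)$, the $p$-adic representability of $N$ by $L+\bv$ is equivalent to that of $n$ by $f$ at every prime. For $N\notin\phi(\z)$, some prime fails the congruence, so $N$ is not locally represented. Since $\phi(n)\ge d^2A$ exactly when $n\ge 0$, the definitions match: $f$ is regular if and only if $L+\bv$ is tight regular. The real place also matches, since positivity is automatic. I expect the only delicate point to be the $2$-adic congruence in the three cases for $\delta$.
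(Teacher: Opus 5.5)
Your proposal is correct and is essentially the paper's proof. The identity $\mu P_m(x)+d^2=(cx-d)^2$ transfers representations over $\z$ and over each $\z_p$, and the only substantive point is that local solvability at primes $p\mid\mu$ forces $N\equiv d^2(a_1+a_2+a_3)\pmod{\mu}$, which the paper simply asserts.

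Three small remarks:
\begin{enumerate}[(1)]
\item For odd $m$ one has $c=2(m-2)\equiv 2\pmod 4$, so $c$ is even, not odd as you state. Your needed fact $x(cx-2d)\in 4\z_2$ still holds, because $x(\frac{c}{2}x-d)\equiv x(x-1)\equiv 0\pmod 2$.
\item The case analysis can be dropped entirely. Since $P_m(x)=(m-2)\binom{x}{2}+x$ lies in $\z_p$ for every $x\in\z_p$, writing $\bx_p=\sum x_i\be_i$ gives $Q(\bx_p+\bv)=\mu f(x_1,x_2,x_3)+d^2(a_1+a_2+a_3)$ with $f(x_1,x_2,x_3)\in\z_p$, and the congruence follows at once.
\item Your derivation $\min(L+\bv)=\mu\min(f)+d^2(a_1+a_2+a_3)$ is more robust than the paper's appeal to $0<d<c/2$, which fails for $m=3,4$.
\end{enumerate}
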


\begin{proof}
First, we observe that $\min(L+\bv)=d^2(a_1+a_2+a_3)$ since $0<d<\dfrac{c}{2}$.
We begin with the ``if" direction.
Let $n$ be a nonnegative integer that is locally represented by $f$,
and define
$$
\beta(n)=\mu n+d^2(a_1+a_2+a_3).
$$
Then $\beta(n)$ is locally represented by $L+\bv$, and $\beta(n)\ge \min(L+\bv)$.
Since $L+\bv$ is tight regular, it follows that $\beta(n)$ is represented by $L+\bv$.
Consequently, $n$ is represented by $f$.
This completes the proof of the ``if" direction.

Next, we prove the ``only if" direction.
Let $N\ge \min(L+\bv)$ be an integer locally represented by $L+\bv$.
Then
$$
a_1(cx_1-d)^2+a_2(cx_2-d)^2+a_3(cx_3-d)^2=N
$$
is solvable over $\z_p$ for every prime $p$, and in particular, for every prime divisor $p$ of $\mu$.
From this, it follows that
$$
N\equiv d^2(a_1+a_2+a_3)\pmod{\mu}.
$$
Since $N\ge \min(L+\bv)$, we may write
$$
N=\mu n'+d^2(a_1+a_2+a_3)
$$
for some nonnegative integer $n'$.
Because $N$ is locally represented by $L+\bv$, it follows that $n'$ is locally represented by $f$.
By the regularity of $f$, $n'$ is represented by $f$.
Consequently, $N$ is represented by $L+\bv$.
This completes the proof of the ``only if" direction, and hence the proposition.
\end{proof}

%%%%%%%%%%%%%%%%%%%%%%%%%%%%%%%%%%%%%%%%%%%%%%%%%%%%%%%%%%%%%%%%%%%%%%%%%%%%%%%%%%%%%%%%%%

Although the following result arises naturally in the proof of Theorem~\ref{thmmain}, we state it separately as a lemma for ease of reference in a subsequent paper.

%%%%%%%%%%%%%%%%%%%%%%%%%%%%%%%%%%%%%%%%%%%%%%%%%%%%%%%%%%%%%%%%%%%%%%%%%%%%%%%%%%%%%%%%%%

\begin{lem} \label{lemg}
Let $m\ge 4$ be an integer for which there exists a regular ternary $m$-gonal form.
Then there exists a tight regular ternary complete quadratic polynomial
$$
g=g(x_1,x_2,x_3)=a_1(cx_1+\alpha_1)^2+a_2(cx_2+\alpha_2)^2+a_3(cx_3+\alpha_3)^2
$$
such that the following conditions hold:
\begin{enumerate} [(i)]
\item The positive integers $a_1,a_2,a_3$ are relatively prime, and $a_1\le a_2\le a_3$.
\item The $\z$-lattice $\langle a_1,a_2,a_3\rangle$ is $p$-stable for every prime $p\nmid c$.
\item For every $i=1,2,3$, we have $\alpha_i\in \z$ and $0<\alpha_i<\dfrac{c}{2}$.
\item $(c,\alpha_1\alpha_2\alpha_3)=1$.
\end{enumerate}
\end{lem}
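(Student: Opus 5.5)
Starting from a regular ternary $m$-gonal form $f=a_1'P_m(x_1)+a_2'P_m(x_2)+a_3'P_m(x_3)$, Proposition~\ref{propcorres} shows that the $\z$-coset $L+\bv$ with $L\simeq\langle a_1'c^2,a_2'c^2,a_3'c^2\rangle$ and $\bv=-\frac{d}{c}(\be_1+\be_2+\be_3)$ is tight regular. Its conductor is $c$, because $(c,d)=1$. The plan is to apply Lemma~\ref{lemlambda} repeatedly to remove all instabilities at primes not dividing $c$, while keeping track of the shape of the coset.

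First I normalize. Dividing the scale by $\gcd(a_1',a_2',a_3')$, i.e.\ passing to $L^{1/g}+\bv$ with $g=\gcd$, preserves tight regularity, since the represented and locally represented sets scale exactly and $\min$ scales accordingly. The polynomial becomes $\sum a_i(cx_i-d)^2$ with $\gcd(a_i)=1$. The coset is then primitive: its norm divides every value $\sum a_i(cx_i-d)^2$, and at any prime $r$ one can choose the $x_i$ so that some term is a unit times $a_i$ with $r\nmid a_i$. More concretely, for $r\nmid c$ take $cx_i-d\equiv 0$ for two indices, and for $r\mid c$ use $r\nmid d$.

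Next, whenever the underlying lattice (with the $c^2$ removed) is $p$-unstable for a prime $p\nmid c$, note that this is equivalent to $L$ being $p$-unstable because $c^2$ is a $p$-adic unit square. I then apply Lemma~\ref{lemlambda} with $j$ minimal such that $p^j\equiv1\pmod c$. The new coset $\lambda_q(L)+p^j\bv$ is primitive, tight regular, diagonal, and has conductor $c$. In coordinates, exactly as in the Remark, it reads $\sum b_i(cx_i+\beta_i)^2$, where $\beta_i\equiv -p^{k_i}d\pmod c$ for suitable exponents $k_i$ and the $b_i$ are the rescaled diagonal entries. Each such step strictly decreases $\ord_p$ of the discriminant of the underlying lattice (scaling by $p^{-s}$ after multiplying by $p^2$ on one or two components), so only finitely many steps are possible. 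Primes dividing $c$ are never touched, and the $\gcd$ of the $b_i$ stays $1$ by primitivity; if necessary I rescale again. After finitely many steps the lattice $\langle a_1,a_2,a_3\rangle$ is $p$-stable for every $p\nmid c$, which is (ii).

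Finally I normalize the shifts. Each $\beta_i$ is coprime to $c$, being $\pm$ a power of $p$ times $d$ with $(p,c)=(c,d)=1$. Replacing $x_i$ by $\pm x_i+k$ changes $\beta_i$ to any residue $\pm\beta_i\bmod c$, so I can take $\alpha_i\in\z$ with $0\le\alpha_i\le c/2$. Coprimality rules out $\alpha_i=0$. It rules out $\alpha_i=c/2$ as well, except when $c\le2$: for $c$ even, $c/2$ is not coprime to $c$ once $c>2$. The hypothesis $m\ge4$ gives $c\ge2$, and the $c=2$ cases ($m=4$ with $\delta=1$, or $m=3$) need a separate check; for $m=4$ we have $d=0$, which is presumably why the lemma assumes $m\ge4$, and I would treat such boundary cases directly. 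This gives (iii) and (iv) with $\alpha_i$ a unit mod $c$. Since these substitutions are equivalences of polynomials, tight regularity is preserved, and permuting indices gives $a_1\le a_2\le a_3$, which yields (i).

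I expect the main obstacle to be the bookkeeping in the iteration: verifying that the resulting coset is again of diagonal form $\sum b_i(cx_i+\beta_i)^2$ with integral $b_i$ (with the lattice's $c^2$ factor preserved), rather than merely some diagonal lattice plus a vector. I would handle this by noting that $\Lambda_q(L)$ has basis $p^{e_i}\be_i$, and then rewriting as in the Remark's computation.
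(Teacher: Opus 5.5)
Your proposal follows the paper's proof essentially step for step. Both start from a primitive regular form, apply Proposition~\ref{propcorres}, then apply Lemma~\ref{lemlambda} finitely many times at $p$-unstable primes $p\nmid c$. Both then read off shifts $w_i$, each equal to $-d$ times a product of primes not dividing $c$ and hence coprime to $c$, and move them into $(0,c/2)$ via $x_i\mapsto \pm x_i+k$. Your termination argument via $\ord_p$ of the discriminant and your explicit coordinates are bookkeeping the paper leaves implicit.

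Two side claims in your proposal are wrong, though neither breaks the argument for $m\ge 5$.

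\textbf{Primitivity at primes $r\mid c$.} This does not follow from $r\nmid d$. For $r\mid c$, every value satisfies $\sum a_i(cx_i-d)^2\equiv d^2(a_1+a_2+a_3)\pmod r$, so if $r\mid a_1+a_2+a_3$ the norm is divisible by $r$. For example, with $m=5$ and $(a_1,a_2,a_3)=(1,1,1)$, every value is $\equiv 3\pmod{24}$, so $\mathfrak n(L+\bv)\subseteq 3\z$. The paper makes the same primitivity assertion when invoking Proposition~\ref{propcorres}. What rescues both arguments:
\begin{enumerate}[(a)]
\item The proof of Lemma~\ref{lemlambda} only uses $\mathfrak n(L_p)=\z_p$ at the prime $p\nmid c$ being treated, and this holds since $\gcd(a_i)=1$.
\item $\gcd(b_i)=1$ persists because $\mathfrak s(\lambda_q(L))=\mathfrak s(L)$, not because of coset primitivity.
\end{enumerate}

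\textbf{The case $m=4$.} Here $\delta=1$, $c=1$, $d=0$, not $c=2$. The coset is then a lattice, and no integer $\alpha_i$ with $0<\alpha_i<\frac12$ exists. Yet regular ternary $4$-gonal forms such as $x_1^2+x_2^2+x_3^2$ exist. So this boundary case cannot be ``treated directly'': the lemma genuinely needs $m\ge 5$, a flaw in the statement that the paper shares. For $m\ge5$ one has $c\ge 3$, and your coprimality argument then correctly excludes $\alpha_i\in\{0,c/2\}$.
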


\begin{proof}
Let $a_1'P_m(x_1)+a_2'P_m(x_2)+a_3'P_m(x_3)$ be a primitive regular ternary $m$-gonal form.
Let
$$
L=\z \be_1+\z \be_2+\z \be_3 \simeq \langle a_1'c^2,a_2'c^2,a_3'c^2\rangle
$$
be a $\z$-lattice, and let $\bv=-\dfrac{d}{c}(\be_1+\be_2+\be_3)\in \q L$.
Then, by Proposition \ref{propcorres}, $L+\bv$ is a primitive tight regular $\z$-coset of conductor $c$.

By applying Lemma~\ref{lemlambda} finitely many times, if necessary, we obtain a primitive tight regular $\z$-coset $K+\bw$ of conductor $c$, where $K$ is a ternary diagonal $\z$-lattice that is $p$-stable for all primes $p\nmid c$.
There exists a $\z$-basis $\{ \mathbf{f}_1, \mathbf{f}_2, \mathbf{f}_3\}$ of $K$ such that
$$
K=\z \mathbf{f}_1+\z \mathbf{f}_2+\z \mathbf{f}_3 \simeq \langle a_1c^2,a_2c^2,a_3c^2\rangle,
$$
where $a_i$'s are positive integers satisfying $a_1\le a_2\le a_3$.
By Lemma~\ref{lemlambda} and the definition of $\lambda_q$-transformations, we have $\mathfrak{s}(L)=\mathfrak{s}(K)$, and hence $(a_1,a_2,a_3)=1$.
Note that
$$
\mathbf{w}=\dfrac{w_1}{c}\mathbf{f}_1+\dfrac{w_2}{c}\mathbf{f}_2+\dfrac{w_3}{c}\mathbf{f}_3
$$
for some integers $w_1,w_2,w_3$ with $(w_1w_2w_3,c)=1$.
For $i=1,2,3$, write $w_i=cq_i+\alpha_i$ with $-\dfrac{c}{2}<\alpha_i<\dfrac{c}{2}$.
Then
$$
K+\bw=\z \mathbf{f}_1+\z \mathbf{f}_2+\z \mathbf{f}_3+\bw=\z \mathbf{f}_1+\z \mathbf{f}_2+\z \mathbf{f}_3+\left( \dfrac{\alpha_1}{c}\mathbf{f}_1+\dfrac{\alpha_2}{c}\mathbf{f}_2+\dfrac{\alpha_3}{c}\mathbf{f}_3\right).
$$
Define a complete quadratic polynomial $g=g(x_1,x_2,x_3)$ by
\begin{align*}
g(x_1,x_2,x_3)&=Q(x_1\mathbf{f}_1+x_2\mathbf{f}_2+x_3\mathbf{f}_3+\bw)\\
&=a_1(cx_1+\alpha_1)^2+a_2(cx_2+\alpha_2)^2+a_3(cx_3+\alpha_3)^2.
\end{align*}
Since $(cx_i+\alpha_i)^2=(c(-x_i)-\alpha_i)^2$, the eight complete quadratic polynomials
$$
a_1(cx_1\pm \alpha_1)^2+a_2(cx_2\pm \alpha_2)^2+a_3(cx_3\pm \alpha_3)^2
$$
are all equivalent.
Replacing $g$ by one of the eight equivalent polynomials, we may assume that $0<\alpha_i<\dfrac{c}{2}$ for each $i$.
For every prime $p\nmid c$, the ternary diagonal $\z$-lattice $\langle a_1,a_2,a_3\rangle$ is isometric to $K$ over $\z_p$, and hence is $p$-stable.
Finally, for each $i=1,2,3$, we have $(\alpha_i,c)=(w_i,c)=1$.
This completes the proof.
\end{proof}

%%%%%%%%%%%%%%%%%%%%%%%%%%%%%%%%%%%%%%%%%%%%%%%%%%%%%%%%%%%%%%%%%%%%%%%%%%%%%%%%%%%%%%%%%%

\begin{rmk} \label{rmkdelta}
With the notation of Lemma~\ref{lemg}, one has $c-2\alpha_i\ge \delta$ for $i=1,2,3$.
Indeed, if $m\equiv 1\pmod{2}$, then $c\equiv 2\alpha_i \equiv 2\pmod{4}$ and $c>2\alpha_i$, so $c-2\alpha_i \ge 4$.
If $m\equiv 2\pmod{4}$, then $c\equiv 0\pmod{4}$ while $2\alpha_i \equiv 2\pmod{4}$, and hence $c-2\alpha_i \ge 2$.
If $m\equiv 0\pmod{4}$, the inequality is immediate since $c-2\alpha_i \ge 1=\delta$.
\end{rmk}

%%%%%%%%%%%%%%%%%%%%%%%%%%%%%%%%%%%%%%%%%%%%%%%%%%%%%%%%%%%%%%%%%%%%%%%%%%%%%%%%%%%%%%%%%%

From now on, we assume that $m$ is an integer greater than or equal to $4$.
The following three lemmas will be used repeatedly in the proof of Theorem~\ref{thmmain}.
For the reader's convenience, we first fix the notation.

Throughout Lemmas~\ref{lempdividec}--\ref{lembound}, we assume that the ternary complete quadratic polynomial
$$
g=g(x_1,x_2,x_3)=a_1(cx_1+\alpha_1)^2+a_2(cx_2+\alpha_2)^2+a_3(cx_3+\alpha_3)^2
$$
is tight regular, where $a_i$'s and $\alpha_i$'s satisfy conditions~(i)--(iv) of Lemma~\ref{lemg}.
Let $J$ be a $\z$-lattice with Gram matrix $\langle a_1,a_2,a_3\rangle$.
Define $T$ to be the set of all primes $p\ge 5$ such that $J_p$ is anisotropic.
Then every element of $T$ divides $a_1a_2a_3$, and hence $T$ is finite.
We write
$$
T=\{p_1<p_2<\cdots<p_t\}
$$
so that $t=|T|$.
Let $\kappa$ be a positive integer coprime to all primes in $T$, and let $\nu$ be a nonnegative integer such that $\delta c(\kappa n+\nu)+\tail$ is represented by $g$ over $\z_2$ and $\z_3$ for every integer $n$.

%%%%%%%%%%%%%%%%%%%%%%%%%%%%%%%%%%%%%%%%%%%%%%%%%%%%%%%%%%%%%%%%%%%%%%%%%%%%%%%%%%%%%%%%%%

\begin{lem} \label{lempdividec}
Let $\beta(n)=\delta cn+\tail$ for $n\in \n_0$.
Then the following hold:
\begin{enumerate}[(i)]
\item The minimum of $g$ is equal to $\tail$.
\item For every prime divisor $p$ of $c$, the integer $\beta(n)$ is represented by $g$ over $\z_p$.
\item For a prime $p$ not dividing $c$, the integer $\beta(n)$ is represented by $g$ over $\z_p$ if and only if it is represented by the $\z$-lattice $\langle a_1,a_2,a_3\rangle$ over $\z_p$.
\end{enumerate}
\end{lem}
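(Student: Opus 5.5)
For part (i), the plan is to minimize each summand separately, since $g$ is a sum of three independent terms. For each $i$, the values $cx_i+\alpha_i$ with $x_i\in\z$ run over the residue class $\alpha_i+c\z$. By condition (iii) of Lemma~\ref{lemg} we have $0<\alpha_i<c/2$, so the element of smallest absolute value in that class is $\alpha_i$ itself, attained at $x_i=0$. Indeed, any other element has absolute value at least $c-\alpha_i>\alpha_i$. Hence $a_i(cx_i+\alpha_i)^2\ge a_i\alpha_i^2$, with equality at $x_i=0$. Summing over $i$ gives $\min(g)=\tail$, attained at $\mathbf{0}$.

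For part (ii), let $p\mid c$. I will write down an explicit $p$-adic solution of $g(\bx)=\beta(n)$. The idea is to keep $x_2=x_3=0$ and solve only in $x_1$:
$$
a_1(cx_1+\alpha_1)^2=a_1\alpha_1^2+\delta cn .
$$
Expanding, this becomes $a_1c^2x_1^2+2a_1c\alpha_1x_1=\delta cn$. That is the easy way to choose it, but a cleaner approach is to pick whichever index makes the equation solvable by Hensel's lemma. The solvability check in one variable is a Hensel-type argument for $y^2=\alpha_1^2+\delta cn/a_1$ with $y\equiv \alpha_1 \pmod{c}$; this needs $p\nmid a_1$ and fails otherwise. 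So instead I will argue through the local coset directly, in three steps.

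\begin{enumerate}[(1)]
\item Over $\z_p$, the coset $L_p+\bv$ coming from $f$ represents $\mu n+d^2(a'_1+a'_2+a'_3)$ for every $n$. This holds because $P_m$ represents $n$ over $\z_p$ when $p\mid c$. Concretely, $\tfrac{(m-2)x^2-(m-4)x}{2}=n$ has the simple root behaviour: its derivative at $x=0$ is $-(m-4)/2$, which relates to $d$ and is a $p$-unit (for $p=2$ this is where $\delta$ enters).
\item By Lemma~\ref{lemlambda}, the Watson steps leave the localization at $p\mid c$ unchanged, since $\Lambda_q(L)_r+p^j\bv=L_r+\bv$ for $r\mid \tilde c$. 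Scaling by $p'^{-s}$ for a prime $p'\neq p$ is a unit square-class issue at $p$.
\item More directly: since $(c,\alpha_1)=1$, the map $x_1\mapsto a_1(cx_1+\alpha_1)^2-a_1\alpha_1^2=a_1c x_1(cx_1+2\alpha_1)$ covers all of $\delta c\z_p$ once $p\nmid a_1$. This rests on $(c,a_1)$ being coprime at $p$, which is the part I expect to be the main obstacle. I would handle it by using primitivity ($(a_1,a_2,a_3)=1$) to choose an index $i$ with $p\nmid a_i$. Then I solve $a_icx(cx+2\alpha_i)=\delta cn$, i.e.\ $cx^2+2\alpha_ix=\delta n/a_i$, by Hensel. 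For odd $p$ the linear coefficient $2\alpha_i$ is a unit. For $p=2$ I would use the factor $\delta$ together with Remark~\ref{rmkdelta}-type parity information on $c$ and $\alpha_i$ to make the equation solvable after dividing by the appropriate power of $2$.
\end{enumerate}

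For part (iii), let $p\nmid c$. Then $c\in\z_p^\times$, so the substitution $y_i=cx_i+\alpha_i$ is a bijection $\z_p\to\z_p$. Therefore $g$ and the diagonal form $a_1y_1^2+a_2y_2^2+a_3y_3^2$ represent exactly the same elements of $\z_p$, which gives the equivalence immediately.
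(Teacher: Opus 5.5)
Your step (3) is the paper's argument for (ii): use $(a_1,a_2,a_3)=1$ to pick $i$ with $p\nmid a_i$, set the other two variables to $0$, and solve the resulting one-variable quadratic over $\z_p$ (the paper applies the Local Square Theorem to the discriminant $a_1^2\alpha_1^2+a_1\delta cn$, while you apply Hensel to $cx^2+2\alpha_ix-\delta n/a_i$), and your (i) and (iii) match the paper's. Drop steps (1)--(2), which depend on how $g$ was built in Lemma~\ref{lemg} rather than on the standing hypotheses; the $p=2$ check you defer is routine: after dividing by $2$ the derivative $cx+\alpha_i$ is odd, and a root mod $2$ exists because either $4\mid c$, or $c\equiv 2\pmod 4$, in which case $m$ is odd and $\delta=4$ (this is the paper's remark that $8\mid\delta c$).
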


\begin{proof}
(i) This is immediate.
(ii) Let $p$ be a prime dividing $c$.
Without loss of generality, we may assume that $(a_1,p)=1$.
 Local Square Theorem \cite[63:1]{OM}, we have
$$
a_1^2\alpha_1^2 + a_1\delta cn \in (\z_p^{\times})^2.
$$
Note that if $p=2$, then $c$ is even, and hence $8\mid \delta c$.
Therefore, the quadratic equation
$$
a_1cx^2 + 2a_1\alpha_1x - \delta n = 0
$$
has a solution, say $x_1$, in $\z_p$.
This implies that
$$
a_1(cx_1+\alpha_1)^2 = \delta cn + a_1\alpha_1^2,
$$
and hence
$$
\delta cn+\tail
= a_1(cx_1+\alpha_1)^2 + a_2\alpha_2^2 + a_3\alpha_3^2=g(x_1,0,0).
$$
(iii) This follows immediately since $c\in \z_p^{\times}$.
\end{proof}

%%%%%%%%%%%%%%%%%%%%%%%%%%%%%%%%%%%%%%%%%%%%%%%%%%%%%%%%%%%%%%%%%%%%%%%%%%%%%%%%%%%%%%%%%%

\begin{lem} \label{lemlocal}
Let $\beta(u)=\delta c(\kappa u+\nu)+\tail$ for $u\in \n_0$.
For any positive integer $s$ with $s\ge t$ and any positive integer $n$, we have
$$
\vert\{0\le u\le n-1 : \beta(u)\ra g\}\vert \ge \eta(n,s).
$$
\end{lem}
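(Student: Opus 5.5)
The plan is to combine tight regularity of $g$ with a prime-by-prime count of the local obstructions. Since every $\beta(u)=\delta c(\kappa u+\nu)+\tail$ is at least $\tail$, which is $\min(g)$ by Lemma~\ref{lempdividec}(i), tight regularity shows that $\beta(u)\ra g$ as soon as $\beta(u)$ is locally represented by $g$. So it is enough to prove
$$
\vert\{0\le u\le n-1 : \beta(u)\ \text{is not locally represented by}\ g\}\vert\le \sum_{p\in T}\psi_p(n),
$$
and then compare the right-hand side with $\eta(n,s)$ for $s\ge t$.

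First we sort out the local conditions prime by prime.
\begin{enumerate}[(a)]
\item Over $\mathbb{R}$ there is nothing to check.
\item At $p=2,3$, representation over $\z_2$ and $\z_3$ holds for every $u$ by the choice of $\kappa$ and $\nu$.
\item At primes $p\mid c$, $\beta(u)\ra g$ over $\z_p$ by Lemma~\ref{lempdividec}(ii), since $\beta(u)$ has the form $\delta c n'+\tail$ with $n'=\kappa u+\nu$.
\item At primes $p\ge 5$ with $p\nmid c$, Lemma~\ref{lempdividec}(iii) reduces the question to $\beta(u)\ra J=\langle a_1,a_2,a_3\rangle$ over $\z_p$. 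If $p\notin T$, then $J_p$ is isotropic, so $Q(J_p)=\z_p$ and there is no obstruction. Primes of $T$ that divide $c$ are already covered by (c).
\end{enumerate}
Hence the failures come only from primes $p\in T$ with $p\nmid c$. For such $p$, Lemma~\ref{lemg}(ii) says $J$ is $p$-stable. Write $\beta(u)=\delta c\kappa\,u+(\delta c\nu+\tail)$. The coefficient $\delta c\kappa$ is prime to $p$: $\kappa$ is coprime to $T$ by hypothesis, $p\nmid c$, and $p\ge5$ does not divide $\delta$. Inequality \eqref{eqpsi} counts $1\le u\le n$ rather than $0\le u\le n-1$, so we first substitute $u=u'-1$, which only shifts the constant term, and then apply \eqref{eqpsi} with $n'=n$. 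This gives at most $\psi_p(n)$ failures at $p$ among $0\le u\le n-1$.

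Taking a union bound over these primes, at least $n-\sum_{p\in T'}\psi_p(n)$ of the $u$ in the range are locally represented, and hence globally represented, where $T'=\{p\in T: p\nmid c\}$ has at most $t$ elements. To reach $\eta(n,s)$ for $s\ge t$, enlarge $T'$ to a set $P'\subset P$ of exactly $s$ primes. This is possible because $P$ is infinite, and adding primes only increases the sum since $\psi_p\ge 0$. Therefore
$$
n-\sum_{p\in T'}\psi_p(n)\ \ge\ n-\sum_{p\in P'}\psi_p(n)\ \ge\ \eta(n,s)
$$
by the definition of $\eta$ as a minimum. The case $n-\sum_{p\in P'}\psi_p(n)<0$ causes no trouble, since the count on the left of the lemma is nonnegative anyway.

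The step needing most care is the index shift when applying \eqref{eqpsi}, together with checking that $\delta c\kappa$ is a $p$-adic unit. Both are routine. The rest is bookkeeping.
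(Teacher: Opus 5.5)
This is essentially the paper's own argument (local representability at every place outside $T$, tight regularity above $\min(g)$, a union bound over $T$ via \eqref{eqpsi}, and monotonicity of $\eta(n,\cdot)$ in the second argument). Your extra care with primes of $T$ dividing $c$, the check that $\delta c\kappa$ is a $p$-adic unit, and the index shift in \eqref{eqpsi} is all correct. One inference in (d) needs tightening: isotropy of $J_p$ alone does not give $Q(J_p)=\z_p$ for a $\z_p$-lattice (e.g.\ $\langle 1,p,-p\rangle$ is isotropic but misses $\Delta_p$). You must also use that $J$ is $p$-stable for $p\nmid c$ by Lemma~\ref{lemg}(ii), which together with isotropy forces $\langle 1,-1\rangle\ra J_p$.
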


\begin{proof}
We first claim that, for every $u\in \n_0$, the integer $\beta(u)$ is represented by $g$ over $\z_p$ for all $p\in \Omega\smallsetminus T$.
Clearly, $\beta(u)$ is represented by $g$ over $\mathbb{R}$.
By the choice of $\nu$, the integer $\beta(u)$ is represented by $g$ over $\z_p$ for $p=2,3$.
Let $p\ge 5$ be a prime not contained in $T$.
If $p$ divides $c$, then $\beta(u)$ is represented by $g$ over $\z_p$ by Lemma~\ref{lempdividec}(ii).
Thus, we may assume that $p\nmid c$.
Then $J$ is $p$-stable and isotropic over $\z_p$, so that $Q(J_p)=\z_p$.
Hence $\beta(u)$ is represented by $J$ over $\z_p$.
By Lemma~\ref{lempdividec}(iii), it follows that $\beta(u)$ is represented by $g$ over $\z_p$.
This proves the claim.

One sees that
\begin{align*}
&\vert\{0\le u\le n-1 : \beta(u) \ra g\}\vert \\
&=\vert\{0\le u\le n-1 : \beta(u) \ra g\ \text{over}\ \z_p\ \text{for all}\ p\in \Omega \}\vert \\
&=\vert\{0\le u\le n-1 : \beta(u) \ra g\ \text{over}\ \z_p\ \text{for all}\ p\in T\}\vert \\
&=n- \vert\{0\le u\le n-1 : \beta(u) \nra g\ \text{over}\ \z_p\ \text{for some}\ p\in T\}\vert \\
&=n- \vert\{0\le u\le n-1 : \beta(u) \nra J \ \text{over}\ \z_p\ \text{for some}\ p\in T\}\vert,
\end{align*}
where the first equality follows from the tight regularity of $g$, the second from the above claim, the third is immediate, and the last follows from Lemma~\ref{lempdividec}(iii).
Finally, we estimate
\begin{align*}
&n- \vert\{0\le u\le n-1 : \beta(u) \nra J \ \text{over}\ \z_p\ \text{for some}\ p\in T\}\vert \\
&\ge n-\sum_{p\in T}\vert\{0\le u<n-1 : \beta(u) \nra J \ \text{over}\ \z_p\}\vert \\
&\ge n-\sum_{p\in T} \psi_p(n)\ge \eta(n,t)\ge \eta(n,s),
\end{align*}
where the second inequality follows from Equation \eqref{eqpsi}, and the third from the definition of $\eta(n,t)$.
This completes the proof.
\end{proof}

%%%%%%%%%%%%%%%%%%%%%%%%%%%%%%%%%%%%%%%%%%%%%%%%%%%%%%%%%%%%%%%%%%%%%%%%%%%%%%%%%%%%%%%%%%

\begin{lem} \label{lembound}
Let $s$ be a positive integer with $s\ge t$, and let $n$ and $N$ be positive integers.
\begin{enumerate}[(i)]
\item If 
$$
\left| \{ 0\le u\le n : \delta c(\kappa u+\nu)+\tail \ra g \} \right| \ge 2,
$$
then $a_1\le \kappa n+\nu$.

\item If $\eta(n,s)>8N^3$, then
$$
a_1(N^2c+2N)\le \delta(\kappa (n-1)+\nu).
$$
In particular, if $\eta(n,s)=9$, then
$$
a_1\le \left\lfloor \dfrac{\delta(\kappa (n-1)+\nu)}{c+2}\right\rfloor,
$$
where $\lfloor \cdot \rfloor$ denotes the floor function.

\item If $\eta(n,s)>2N$ and $N^2c+2N>\delta (\kappa (n-1)+\nu)$, then
$$
a_2\le \kappa (n-1)+\nu.
$$
\end{enumerate}
\end{lem}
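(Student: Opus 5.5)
The plan is to write every value of $g$ as its minimum plus a sum of nonnegative increments, and then argue by counting lattice points. Put $h_i(x)=(cx+\alpha_i)^2-\alpha_i^2=cx(cx+2\alpha_i)$. Then
$$
g(x_1,x_2,x_3)=\tail+c\sum_{i=1}^3 a_i\,x_i(cx_i+2\alpha_i).
$$
By Lemma~\ref{lemg}(iii) we have $0<\alpha_i<c/2$, so $x(cx+2\alpha_i)\ge 0$ for every integer $x$, with equality only at $x=0$. The two smallest nonzero values are the following.
\begin{itemize}
\item At $x=-1$ the value is $c-2\alpha_i\ge\delta$, by Remark~\ref{rmkdelta}.
\item At $x=1$ the value is $c+2\alpha_i$, which is larger.
\end{itemize}
Hence $g(\bx)=\tail+\delta c(\kappa u+\nu)$ forces $\delta(\kappa u+\nu)=\sum_i a_ix_i(cx_i+2\alpha_i)$. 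Also, any $\bx$ with some $x_i\neq 0$ gives $\delta(\kappa u+\nu)\ge a_i\delta\ge a_1\delta$.

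\textbf{Part (i).} At most one $u$ can satisfy $\kappa u+\nu=0$; that value is represented only by $\bx=\mathbf 0$. So if two values $u\le n$ are represented, one of them is represented by some $\bx\neq\mathbf 0$. For that $u$ we get $a_1\delta\le\delta(\kappa u+\nu)\le\delta(\kappa n+\nu)$, which is the claim.

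\textbf{Part (ii).} By Lemma~\ref{lemlocal}, more than $8N^3$ values $u\in[0,n-1]$ have $\beta(u)\ra g$. Distinct $u$ need distinct vectors $\bx$. There are only $(2N)^3=8N^3$ vectors with every $x_i\in[-N,N-1]$, so some represented $\beta(u)$ comes from a vector with $x_i\ge N$ or $x_i\le -N-1$ for some $i$. I then bound the increment for such an $x_i$:
\begin{itemize}
\item for $x_i\ge N$, using $\alpha_i\ge 1$, we have $x_i(cx_i+2\alpha_i)\ge N(cN+2)=N^2c+2N$;
\item for $x_i\le -N-1$, using $c-2\alpha_i\ge 1$, we have $x_i(cx_i+2\alpha_i)\ge (N+1)(cN+c-2\alpha_i)\ge (N+1)(cN+1)\ge N^2c+2N$.
\end{itemize}
Either way, $\delta(\kappa(n-1)+\nu)\ge\delta(\kappa u+\nu)\ge a_1(N^2c+2N)$. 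Taking $N=1$, the hypothesis $\eta(n,s)=9>8$ gives $a_1(c+2)\le\delta(\kappa(n-1)+\nu)$, and the floor statement follows.

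\textbf{Part (iii).} Again by Lemma~\ref{lemlocal}, more than $2N$ values $u\in[0,n-1]$ are represented. Suppose all of them were represented by vectors with $x_2=x_3=0$. Any $x_1$ outside $[-N,N-1]$ gives $\delta(\kappa u+\nu)\ge a_1(N^2c+2N)\ge N^2c+2N>\delta(\kappa(n-1)+\nu)$ by the bound in (ii), which is impossible for $u\le n-1$. So only the $2N$ values $x_1\in[-N,N-1]$ are available, which contradicts having more than $2N$ represented values. Therefore some represented value uses $x_2\neq0$ or $x_3\neq0$. Its increment is at least $a_2\delta$, since $a_3\ge a_2$, so $a_2\delta\le\delta(\kappa(n-1)+\nu)$.

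The only delicate step is the lower bound on the increment in (ii) when $x_i\le -N-1$. This is exactly where Remark~\ref{rmkdelta} and the normalization $0<\alpha_i<c/2$ are used.
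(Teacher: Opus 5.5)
Your proof is correct and is essentially the paper's argument: you exclude the $2N$ (respectively $(2N)^3$) smallest choices per coordinate, which are exactly the paper's representatives $y_i\in\{\alpha_i, c-\alpha_i, c+\alpha_i,\dots,Nc-\alpha_i\}$, and then bound the remaining increment using $c-2\alpha_i\ge\delta$ (Remark~\ref{rmkdelta}) and $\alpha_i\ge 1$. The only difference is cosmetic: you parametrize by $x_i\in\z$ with increments $x_i(cx_i+2\alpha_i)$ and handle $x_i\ge N$ and $x_i\le -N-1$ separately, whereas the paper orders the nonnegative values $y_i\equiv\pm\alpha_i \pmod c$ directly.
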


\begin{proof}
\noindent (i)
By assumption, there exists an integer $n_0$ with $0\le n_0\le n$ such that
$$
\delta c(\kappa n_0+\nu)+\tail=a_1y_1^2+a_2y_2^2+a_3y_3^2,
$$
where the $y_i$ are integers satisfying $y_i\equiv \pm \alpha_i\pmod c$ and there exists an index $j\in \{1,2,3\}$ such that $y_j\ge c-\alpha_j$.
Thus,
\begin{align*}
a_j(c-\alpha_j)^2+\sum_{k\in \{1,2,3\}\smallsetminus \{j\}}a_k\alpha_k^2&\le a_1y_1^2+a_2y_2^2+a_3y_3^2\\
&\le \delta c(\kappa n+\nu)+\tail.
\end{align*}
This implies that
$$
a_j(c^2-2c\alpha_j)\le \delta c(\kappa n+\nu).
$$
By Remark \ref{rmkdelta}, we have $a_j\le \kappa n+\nu$.
Since $a_1\le a_2\le a_3$, it follows that $a_1\le \kappa n+\nu$.

\noindent (ii)
By Lemma \ref{lemlocal}, we have
$$
\vert \{ 0\le u\le n-1 : \delta c(\kappa u+\nu)+\tail \ra g\} \vert \ge \eta(n,s)>8N^3.
$$
On the other hand,
$$
\vert \{a_1y_1^2+a_2y_2^2+a_3y_3^2 : y_i\in \{ \alpha_i,c-\alpha_i,c+\alpha_i,\dots,Nc-\alpha_i\} \ \text{for}\ i=1,2,3\} \vert \le 8N^3.
$$
It follows that there exists an integer $n_0$ with $0\le n_0\le n-1$ such that
$$
\delta c(\kappa n_0+\nu)+\tail=a_1y_1^2+a_2y_2^2+a_3y_3^2,
$$
where the $y_i$ are integers with $y_i\equiv \pm \alpha_i\pmod c$ and there exists an index $j\in \{1,2,3\}$ such that $y_j\ge Nc+\alpha_j$.
Thus,
\begin{align*}
a_j(Nc+\alpha_j)^2+\sum_{k\in \{1,2,3\}\smallsetminus \{j\}}a_k\alpha_k^2&\le a_1y_1^2+a_2y_2^2+a_3y_3^2\\
&\le \delta c(\kappa (n-1)+\nu)+\tail.
\end{align*}
Hence,
$$
a_j(N^2c^2+2Nc\alpha_j)\le \delta c(\kappa (n-1)+\nu).
$$
Since $a_1\le a_2\le a_3$ and $\alpha_j\ge 1$, we obtain
$$
a_1(N^2c+2N)\le \delta (\kappa (n-1)+\nu).
$$

\noindent (iii)
Since $\eta(n,s)>2N$, it follows from Lemma~\ref{lemlocal} that
$$
\vert \{ 0\le u\le n-1 : \delta c(\kappa u+\nu)+\tail \ra g\} \vert >2N.
$$
Since $a_1,\alpha_1\in \n$, the inequality
$$
N^2c+2N>\delta (\kappa(n-1)+\nu)
$$
implies that
$$
a_1(Nc+\alpha_1)^2+a_2\alpha_2^2+a_3\alpha_3^2
> \delta c (\kappa (n-1)+\nu)+\tail.
$$
On the other hand,
$$
\vert \{ a_1y_1^2+a_2\alpha_2^2+a_3\alpha_3^2 : y_1\in \{ \alpha_1,c-\alpha_1,c+\alpha_1,\dots,Nc-\alpha_1\} \vert=2N.
$$
It follows that there exists an integer $n_0$ with $0\le n_0\le n-1$ such that
$$
\delta c(\kappa n_0+\nu)+\tail=a_1y_1^2+a_2y_2^2+a_3y_3^2,
$$
where the $y_i$ are integers satisfying $y_i\equiv \alpha_i\pmod c$ and there exists an index $j\in \{2,3\}$ such that $y_j\ge c-\alpha_j$.
Thus,
\begin{align*}
a_j(c-\alpha_j)^2+\sum_{k\in \{1,2,3\} \smallsetminus \{j\}} a_k\alpha_k^2&\le  a_1y_1^2+a_2y_2^2+a_3y_3^2\\
&\le \delta c(\kappa (n-1)+\nu)+\tail.
\end{align*}
This implies that
$$
a_j(c-2\alpha_j)\le \delta (\kappa (n-1)+\nu).
$$
By the choice of $j$ and Remark \ref{rmkdelta}, it follows that
$$
a_2\le a_j\le \kappa (n-1)+\nu.
$$
This completes the proof.
\end{proof}

%%%%%%%%%%%%%%%%%%%%%%%%%%%%%%%%%%%%%%%%%%%%%%%%%%%%%%%%%%%%%%%%%%%%%%%%%%%%%%%%%%%%%%%%%%

We are now ready to prove the main theorem.

%%%%%%%%%%%%%%%%%%%%%%%%%%%%%%%%%%%%%%%%%%%%%%%%%%%%%%%%%%%%%%%%%%%%%%%%%%%%%%%%%%%%%%%%%%

\begin{proof}[(Proof of Theorem \ref{thmmain})]
Let $m\ge 4$ be an integer for which there exists a regular ternary $m$-gonal form.
By Lemma~\ref{lemg}, there exists a tight regular ternary complete quadratic polynomial
$$
g=g(x_1,x_2,x_3)=a_1(cx_1+\alpha_1)^2+a_2(cx_2+\alpha_2)^2+a_3(cx_3+\alpha_3)^2
$$
satisfying conditions (i)--(iv) of that lemma.
Let $T$ be the set of all primes $p\ge 5$ with $(p,c)=1$ such that the $\z$-lattice $\langle a_1,a_2,a_3\rangle$ is anisotropic over $\z_p$.
Then every prime in $T$ divides $a_1a_2a_3$, and hence $T$ is finite.
Write $T=\{p_1<p_2<\cdots<p_t\}$, so that $|T|=t$.
Later, we divide the argument into four cases, in each of which we show that 
$t\le 4,6,6$, or $9$, respectively.
Since these bounds suffice, we may assume that $t>0$.
We next derive an upper bound for $c$, which in turn yields an upper bound for $m$.
Henceforth, we assume that $c\ge 36$.
By Lemma~\ref{lempdividec}(ii),
$$
\delta cn+\tail \ra g \quad \text{over } \z_p
$$
for every prime divisor $p$ of $c$ and every $n\in \n_0$.
Note that $2\mid c$ if and only if $m\not\equiv 0 \pmod{4}$, and $3\mid c$ if and only if $m\equiv 2 \pmod{3}$.
Using Lemma~\ref{lem1}, choose an integer $v$ satisfying the conditions of the lemma for $p=p_1$ and $u=\delta c$.
Define
$$
\kappa=\begin{cases}
1 & \text{if } m\not\equiv 0 \pmod{4},\ m\equiv 2 \pmod{3},\\
3 & \text{if } m\not\equiv 0 \pmod{4},\ m\not\equiv 2 \pmod{3},\\
4 & \text{if } m\equiv 0 \pmod{4},\ m\equiv 2 \pmod{3},\\
12 & \text{if } m\equiv 0 \pmod{4},\ m\not\equiv 2 \pmod{3}.
\end{cases}
$$
Define an integer $\nu_1$ as follows.
If $m\not\equiv 0 \pmod{4}$ and $m\equiv 2 \pmod{3}$, set $\nu_1=0$.
If $m\not\equiv 0 \pmod{4}$ and $m\not\equiv 2 \pmod{3}$, choose $\nu_1 \in \{0,1\}$ such that
$$
\delta cp_1^2\nu_1+\delta cv+\tail \not\equiv 0 \pmod{3}.
$$
If $m\equiv 0 \pmod{4}$ and $m\equiv 2 \pmod{3}$, then by Lemma~\ref{lemz23} we may choose $\nu_1 \in \{0,1,2\}$ such that
$$
\delta cp_1^2(4n+\nu_1)+\delta cv+\tail
$$
is represented by $g$ over $\z_2$ for every integer $n$.
If $m\equiv 0 \pmod{4}$ and $m\not\equiv 2 \pmod{3}$, then by Lemma~\ref{lemz23} we may choose $\nu_1 \in \{0,1,2,3,4\}$ such that
$$
\delta cp_1^2(12n+\nu_1)+\delta cv+\tail
$$
is represented by $g$ over $\z_2$ and $\z_3$ for every integer $n$.

By Lemma~\ref{lem2}, choose an integer $w$ with $0\le w<(t+3)2^{t-3}$ such that
$$
(\delta cp_1^2(\kappa w+\nu_1)+\delta cv+\tail,\; p_2p_3\cdots p_t)=1.
$$
Set $k=p_1^2(\kappa w+\nu_1)+v$.
Then
$$
\delta ck+\tail \ra g \quad \text{over } \z_p
$$
for all primes $p$.
Since $g$ is tight regular and
$$
\delta ck+\tail \ge \min(g),
$$
it follows that $\delta ck+\tail$ is represented by $g$.
Hence there exist integers $y_1,y_2,y_3$ such that
$$
\delta ck+\tail=a_1(cy_1+\alpha_1)^2+a_2(cy_2+\alpha_2)^2+a_3(cy_3+\alpha_3)^2.
$$
From the choice of $v$ and $k$, it follows that
$$
\delta ck+a_1\alpha_1^2+a_2\alpha_2^2
=(\delta cv+a_1\alpha_1^2+a_2\alpha_2^2)\epsilon_{p_1}^2
$$
for some $\epsilon_{p_1} \in \z_{p_1}^{\times}$.
Therefore,
$$
\delta ck+a_1\alpha_1^2+a_2\alpha_2^2 \nra \langle a_1,a_2\rangle 
\quad \text{over } \z_{p_1}.
$$
This implies that $(cy_3+\alpha_3)^2 \ne \alpha_3^2$.
Observe that
$$
\{(cx+\alpha_3)^2 : x\in \z \}
=\{ \alpha_3^2,\ (c-\alpha_3)^2,\ (c+\alpha_3)^2,\ (2c-\alpha_3)^2,\ \dots \},
$$
which is a strictly increasing sequence.
Hence $(cy_3+\alpha_3)^2 \ge (c-\alpha_3)^2$, and therefore
$$
a_1\alpha_1^2+a_2\alpha_2^2+a_3(c-\alpha_3)^2
\le \delta ck+\tail.
$$
It follows that
$$
a_3 \le \frac{\delta k}{c-2\alpha_3}.
$$
Since $c-2\alpha_3 \ge \delta$ by Remark~\ref{rmkdelta}, we conclude that $a_3 \le k$.

Now we define a nonnegative integer $\nu_2$ such that
$$
\delta c(\kappa u+\nu_2)+\tail
$$
is represented by $g$ over $\z_2$ and $\z_3$ for every $u\in \n_0$.
If $m\not\equiv 0 \pmod{4}$ and $m\equiv 2 \pmod{3}$, set $\nu_2=0$.
If $m\not\equiv 0 \pmod{4}$ and $m\not\equiv 2 \pmod{3}$, choose $\nu_2\in \{0,1\}$ such that
$$
\delta c\nu_2+\tail \not\equiv 0 \pmod{3}.
$$
If $m\equiv 0 \pmod{4}$ and $m\equiv 2 \pmod{3}$, then by Lemma~\ref{lemz23} we may choose $\nu_2\in \{0,1,2\}$ such that
$$
\delta c(\kappa u+\nu_2)+\tail \ra g \quad \text{over } \z_2
$$
for all $u\in \n_0$.
If $m\equiv 0 \pmod{4}$ and $m\not\equiv 2 \pmod{3}$, then by Lemma~\ref{lemz23} we may choose $\nu_2\in \{0,1,2,3,4\}$ such that
$$
\delta c(\kappa u+\nu_2)+\tail \ra g \quad \text{over } \z_2 \text{ and } \z_3
$$
for all $u\in \n_0$.

We divide the argument into the following four cases.
Since the arguments are similar, we treat the first case in detail and only sketch the proofs of the remaining cases.

\noindent \textbf{(Case 1)} Suppose that $m\not\equiv 0 \pmod{4}$ and $m\equiv 2 \pmod{3}$.
In this case,
$$
\delta \in \{2,4\}, \quad \kappa=1, \quad \nu_2=0.
$$

\noindent \textbf{Step 1.} We show that $t\le 15$.
Since
$$
a_3\le k\le p_1^2(t+3)2^{t-3},
$$
it follows that
$$
p_1p_2\cdots p_t \le a_1a_2a_3 \le a_3^3 \le p_1^6((t+3)2^{t-3})^3.
$$
Suppose that $t\ge 16$.
Then, by Proposition~\ref{propproduct}(i),
$$
r_7r_8\cdots r_t>((t+3)2^{t-3})^3,
$$
and hence
$$
p_1p_2\cdots p_t \ge p_1^6 r_7r_8\cdots r_t > p_1^6((t+3)2^{t-3})^3,
$$
which is a contradiction.
Therefore, $t\le 15$.

\noindent \textbf{Step 2.} We show that $t\le 6$.
Note that $\eta(49,15)=5$.
By Lemma~\ref{lemlocal},
$$
\left| \{ 0\le n\le 48 : \delta cn+\tail \ra g\} \right| \ge 5,
$$
and hence
$$
\left| \{ 0\le n\le 45 : \delta cn+\tail \ra g\} \right| \ge 2.
$$
By Lemma~\ref{lembound}(i), it follows that $a_1\le 45$.

Note that $\eta(50,15)=7$, and that $9c+6>4\cdot 49$ since $c\ge 36$.
Applying Lemma~\ref{lembound}(iii) with $N=3$, we obtain $a_2\le 49$.

Thus,
$$
p_1p_2\cdots p_t \le a_1a_2a_3 \le 45\cdot 49\cdot p_1^2(t+3)2^{t-3}.
$$
Suppose that $t\ge 7$.
Then, by Proposition~\ref{propproduct}(ii),
$$
r_3r_4\cdots r_t > 45\cdot 49\cdot (t+3)2^{t-3},
$$
and hence
$$
p_1p_2\cdots p_t > 45\cdot 49\cdot p_1^2(t+3)2^{t-3},
$$
which is a contradiction.
Therefore, $t\le 6$.

\noindent \textbf{Step 3.} We show that $t\le 4$.
Note that $\eta(25,6)=9$.
By Lemma~\ref{lembound}(ii) with $n=25$ and $s=6$, we have
$$
a_1 \le \left\lfloor \frac{\delta(25-1)}{c+2} \right\rfloor
\le \left\lfloor \frac{96}{38} \right\rfloor = 2.
$$

Note that $\eta(19,6)=5$.
Since $c\ge 36$, we have $4c+4>4\cdot 18$.
Hence, applying Lemma~\ref{lembound}(iii) with $N=2$, we obtain $a_2\le 18$.

Thus,
$$
p_1p_2\cdots p_t \le a_1a_2a_3 \le 2\cdot 18\cdot p_1^2(t+3)2^{t-3}.
$$
By Proposition~\ref{propproduct}(iii), it follows that $t\le 4$.

\noindent \textbf{Step 4.} We derive an upper bound for $c$.
Note that $\eta(20,4)=9$.
By Lemma~\ref{lembound}(ii) with $N=1$, we have $a_1(c+2)\le 19\delta$.
Therefore,
$$
c \le \frac{19\delta}{a_1}-2 \le 19\delta - 2
=
\begin{cases}
74 & \text{if } m\equiv 1 \pmod{2},\ m\equiv 2 \pmod{3},\\
36 & \text{if } m\equiv 2 \pmod{4},\ m\equiv 2 \pmod{3}.
\end{cases}
$$
If $m\equiv 5 \pmod{6}$, then $c=2(m-2)$, and hence $m\le 39$.
Using the congruence $m\equiv 5 \pmod{6}$, we further obtain $m\le 35$.
Similarly, one deduces that $m\le 38$ when $m\equiv 2 \pmod{12}$.
This completes the proof of Case~1.

\vskip 7pt

\noindent \textbf{(Case 2)} Suppose that $m\not\equiv 0 \pmod{4}$ and $m\not\equiv 2 \pmod{3}$.
In this case,
$$
\delta \in \{2,4\}, \quad \kappa=3, \quad 0\le \nu_2\le 1.
$$

\noindent \textbf{Step 1.} We show that $t\le 17$.
Since
$$
a_3\le k\le p_1^2(3(t+3)2^{t-3}-1),
$$
it follows that
$$
p_1p_2\cdots p_t \le a_1a_2a_3 \le a_3^3 \le p_1^6(3(t+3)2^{t-3}-1)^3.
$$
By Proposition~\ref{propproduct}(iv), it follows that $t\le 17$.

\noindent \textbf{Step 2.} We show that $t\le 8$.
Note that $\eta(49,17)=3$.
By Lemma~\ref{lemlocal},
$$
\# \{ 0\le n\le 48 : \delta c(3n+\nu_2)+\tail \ra g\} \ge 3,
$$
and hence
$$
\# \{ 0\le n\le 47 : \delta c(3n+\nu_2)+\tail \ra g\} \ge 2.
$$
By Lemma~\ref{lembound}(i), it follows that $a_1\le 142$.

Note that $\eta(102,17)=17$, and that $64c+16>4(3\cdot 101+1)$ since $c\ge 36$.
Applying Lemma~\ref{lembound}(iii) with $N=8$, we obtain $a_2\le 304$.

Thus,
$$
p_1p_2\cdots p_t \le a_1a_2a_3 \le 142\cdot 304\cdot p_1^2(3(t+3)2^{t-3}-1).
$$
By Proposition~\ref{propproduct}(v), it follows that $t\le 8$.

\noindent \textbf{Step 3.} We show that $t\le 7$.
Note that $\eta(17,8)=2$.
By Lemma~\ref{lembound}(i), we obtain $a_1\le 3\cdot 16+1=49$.
Since $\eta(48,8)=13$, applying Lemma~\ref{lembound}(iii) with $N=6$, we obtain $a_2\le 142$.

Thus,
$$
p_1p_2\cdots p_t \le 49\cdot 142\cdot p_1^2(3(t+3)2^{t-3}-1).
$$
By Proposition~\ref{propproduct}(vi), it follows that $t\le 7$.

\noindent \textbf{Step 4.} We show that $t\le 6$.
Note that $\eta(30,7)=9$.
By Lemma~\ref{lembound}(ii), we obtain $a_1\le 9$.
Applying Lemma~\ref{lembound}(iii) with $N=4$ (and $\eta(30,7)=9$), we obtain $a_2\le 88$.

Thus,
$$
p_1p_2\cdots p_t \le a_1a_2a_3 \le 9\cdot 88\cdot p_1^2(3(t+3)2^{t-3}-1).
$$
By Proposition~\ref{propproduct}(vii), it follows that $t\le 6$.

\noindent \textbf{Step 5.} We derive an upper bound for $c$.
Note that $\eta(25,6)=9$.
By Lemma~\ref{lembound}(ii) with $N=1$, we have $a_1(c+2)\le 73\delta$.
Therefore,
$$
c \le 73\delta - 2
=
\begin{cases}
290 & \text{if } m\equiv 1 \pmod{2},\ m\not\equiv 2 \pmod{3},\\
144 & \text{if } m\equiv 2 \pmod{4},\ m\not\equiv 2 \pmod{3}.
\end{cases}
$$

\vskip 7pt

\vskip 7pt
\noindent \textbf{(Case 3)} Suppose that $m\equiv 0 \pmod{4}$ and $m\equiv 2 \pmod{3}$.
Then
$$
\delta =1, \quad \kappa=4, \quad 0\le \nu_2\le 2.
$$

The argument is parallel to that of Case~2, with $\kappa=4$ and $\delta=1$.

\noindent \textbf{Step 1.}
Since
$$
a_3 \le k \le p_1^2(4(t+3)2^{t-3}-1),
$$
it follows that
$$
p_1\cdots p_t \le a_1a_2a_3 \le a_3^3 \le p_1^6(4(t+3)2^{t-3}-1)^3,
$$
and hence $t\le 17$ by Proposition~\ref{propproduct}(viii).

\noindent \textbf{Step 2.}
Using Lemma~\ref{lemlocal} with $\eta(49,17)=3$, we obtain
$$
a_1 \le 190.
$$
Applying Lemma~\ref{lembound}(iii) with $\eta(74,17)=7$, we obtain
$$
a_2 \le 294.
$$
Thus, by Proposition~\ref{propproduct}(ix), we have $t\le 8$.

\noindent \textbf{Step 3.}
Similarly, using $\eta(17,8)=2$ and $\eta(28,8)=7$, we obtain
$$
a_1 \le 66, \quad a_2 \le 110,
$$
and hence $t\le 7$ by Proposition~\ref{propproduct}(x).

\noindent \textbf{Step 4.}
Using $\eta(30,7)=9$, Lemma~\ref{lembound}(ii) yields $a_1\le 3$.
Arguing as in the proof of Lemma~\ref{lembound}(iii), we obtain $a_2\le 90$.
Thus $t\le 6$ by Proposition~\ref{propproduct}(xi).

\noindent \textbf{Step 5.}
From Lemma~\ref{lembound}(ii) with $N=1$, we have $a_1(c+2)\le 98$,
and hence $c\le 96$.

\vskip 7pt

\noindent \textbf{(Case 4)} Suppose that $m\equiv 0 \pmod{4}$ and $m\not\equiv 2 \pmod{3}$.
Then
$$
\delta =1, \quad \kappa=12, \quad 0\le \nu_2\le 4.
$$

The initial steps follow the same pattern as in Case~2.

\noindent \textbf{Step 1.}
Since
$$
a_3 \le k \le p_1^2(12(t+3)2^{t-3}-7),
$$
we obtain
$$
p_1\cdots p_t \le a_3^3 \le p_1^6(12(t+3)2^{t-3}-7)^3,
$$
and hence $t\le 19$ by Proposition~\ref{propproduct}(xii).

\noindent \textbf{Step 2.}
Using Lemma~\ref{lemlocal} with $\eta(50,19)=3$, we obtain
$$
a_1 \le 580.
$$
Applying Lemma~\ref{lembound}(iii) with $\eta(125,19)=25$, we obtain
$$
a_2 \le 1492.
$$
Thus, by Proposition~\ref{propproduct}(xiii), we have $t\le 9$.

\noindent \textbf{Step 3.}
We derive an upper bound for $c$.
Assume, for contradiction, that $c\ge 356$.
Then, for $i=1,2$, we have
$$
a_i(4c-4\alpha_i)
= a_i(2c+2(c-2\alpha_i))
\ge a_i(2c+2)
> 12\cdot 59+4,
$$
and hence
$$
a_i(2c-\alpha_i)^2
+ \sum_{j\ne i} a_j\alpha_j^2
\ge c(12\cdot 59+\nu_2)+\tail.
$$

Since $\eta(60,9)=19$, if
$$
a_1\alpha_1^2+a_2\alpha_2^2+a_3(c+\alpha_3)^2
> c(12\cdot 59+\nu_2)+\tail,
$$
then
\begin{align*}
\{ 0&\le u\le 59 : c(12u+\nu_2)\ra g\} \\
&\subseteq \{ a_1y_1^2+a_2y_2^2+a_3y_3^2 :
y_3\in \{\alpha_3, c-\alpha_3\},\ 
y_i\in \{\alpha_i, c-\alpha_i, c+\alpha_i\} \ (i=1,2)\},
\end{align*}
which has cardinality at most $18$, a contradiction.

Therefore,
$$
a_1\alpha_1^2+a_2\alpha_2^2+a_3(c+\alpha_3)^2
\le c(12\cdot 59+\nu_2)+\tail,
$$
and hence
$$
a_3(c+2\alpha_3) \le 12\cdot 59+4.
$$

Since $c\ge 356$, it follows that $a_3=1$, and hence $a_1=a_2=a_3=1$.
Thus,
$$
c(12u+\nu_2)+\alpha_1^2+\alpha_2^2+\alpha_3^2 \ra g
\quad \text{for all } u\in \n_0.
$$
In particular,
$$
\left| \{ 0\le u\le 8 :
c(12u+\nu_2)+\alpha_1^2+\alpha_2^2+\alpha_3^2 \ra g\} \right| = 9.
$$

Arguing as in the proof of Lemma~\ref{lembound}(ii), we obtain
$$
(c+\alpha_j)^2 + \sum_{k\ne j} \alpha_k^2
\le c(12\cdot 8+\nu_2)+\alpha_1^2+\alpha_2^2+\alpha_3^2
$$
for some $j\in \{1,2,3\}$,
which is impossible since $c\ge 356$.
Therefore, $c\le 355$.
This completes the proof.
\end{proof}

%%%%%%%%%%%%%%%%%%%%%%%%%%%%%%%%%%%%%%%%%%%%%%%%%%%%%%%%%%%%%%%%%%%%%%%%%%%%%%%%%%%%%%%%%%

%%%%%%%%%%%%%%%%%%%%%%%%%%%%%%%%%%%%%%%%%%%%%%%%%%%%%%%%%%%%%%%%%%%%%%%%%%%%%

\end{document}